\theoremstyle{plain}
\newtheorem{definition}{Definition}[section]
\newtheorem{thm}[definition]{Theorem}
\newtheorem{lem}[definition]{Lemma}
\newtheorem{cor}[definition]{Corollary}
\newtheorem{rem}[definition]{Remark}
\title{Integer group determinants for ${\rm C}_{2}^{4}$}
\author{Yuka Yamaguchi and Naoya Yamaguchi}
\date{\today}
\keywords{Group determinant, Integer group determinant, Cyclic group}
\subjclass{11C20, 11E76, 20C15}
\begin{document}

\begin{abstract}
We determine all possible values of the integer group determinant of ${\rm C}_{2}^{4}$, 
where ${\rm C}_{2}$ is the cyclic group of order $2$. 
\end{abstract}

\maketitle

\section{Introduction}
For a finite group $G$, 
assigning a variable $x_{g}$ for each $g \in G$, 
the group determinant of $G$ is defined as $\det{\left( x_{g h^{- 1}} \right)}_{g, h \in G}$. 
When the variables $x_{g}$ are all integers, 
the group determinant is called an integer group determinant of $G$. 
For a broad context of group determinants, see their use in the identification of a group \cite{MR1062831}, \cite{MR1123661}, \cite{MR4227663};
the related abelian question of circulants \cite{MR2914452}, \cite{MR3165542}; 
the Lind-Lehmer problem \cite{MR3879399}, \cite{MR3182012}, \cite{MR3165542}; 
and the representation theoretic background \cite{johnson2019group}.

Let $S(G)$ denote the set of all possible values of the integer group determinant of $G$: 
$$
S(G) := \left\{ \det{\left( x_{g h^{- 1}} \right)}_{g, h \in G} \mid x_{g} \in \mathbb{Z} \right\}. 
$$
Let ${\rm C}_{2}$ be the cyclic group of order $2$. 
The complete descriptions of $S \left( {\rm C}_{2}^{2} \right)$ and $S \left( {\rm C}_{2}^{3} \right)$ are obtained in \cite[Theorem~5.3]{MR3879399} and \cite[Theorem~3.1]{MR4056860}, respectively: 
\begin{itemize}
\item $S \left( {\rm C}_{2}^{2} \right) = \left\{ 4 m + 1, \: 2^{4}(2 m + 1), \: 2^{6} m \mid m \in \mathbb{Z} \right\}$; 
\item $S \left( {\rm C}_{2}^{3} \right) =  \left\{ 8 m + 1, \: 2^{8}(4 m + 1), \: 2^{12} m \mid m \in \mathbb{Z} \right\}$. 
\end{itemize}

In this paper,  we determine $S \left( {\rm C}_{2}^{4} \right)$. 
\begin{thm}\label{thm:1.1}
Let $A := \left\{ (8 k - 3) (8 l + 3) \mid k, l \in \mathbb{Z} \right\} \subsetneq \left\{ 8m - 1 \mid m \in \mathbb{Z} \right\}$. 
Then we have 
\begin{align*}
S \left( {\rm C}_{2}^{4} \right) &= \left\{ 16 m + 1, \: 2^{16} (4 m + 1), \: 2^{24} (4 m + 1), \: 2^{24} (8m + 3), \: 2^{24} m', \: 2^{26} m \mid m \in \mathbb{Z}, \: m' \in A \right\}. 
\end{align*}
\end{thm}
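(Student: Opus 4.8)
The group ${\rm C}_2^4$ is abelian, so the group determinant factors completely over the characters: $\det(x_{gh^{-1}}) = \prod_{\chi} \left( \sum_{g} x_g \chi(g) \right)$, where $\chi$ ranges over the $16$ characters of ${\rm C}_2^4$, each taking values in $\{\pm 1\}$. Writing $a_g = x_g$, each factor $\sum_g a_g \chi(g)$ is an integer, and these $16$ linear forms are precisely the Fourier–Hadamard transform of the vector $(a_g)$. So the first step is to set up the Hadamard matrix $H$ of order $16$ and study the image lattice: the map $(a_g) \mapsto (y_\chi)$ with $y_\chi = \sum_g a_g\chi(g)$ has image the sublattice of $\mathbb{Z}^{16}$ cut out by the congruences coming from $H^{-1} = \frac{1}{16}H^{\top}$ being integral only up to a factor $16$; concretely $y_\chi \equiv y_{\chi'} \pmod 2$ for all $\chi,\chi'$, and $\sum_\chi \pm y_\chi \equiv 0 \pmod{16}$ for suitable sign patterns. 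Thus $S({\rm C}_2^4)$ is exactly the set of products $\prod_\chi y_\chi$ as $(y_\chi)$ runs over this explicit lattice $L \subseteq \mathbb{Z}^{16}$.

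The second step is to split into the two cases dictated by the parity of a (hence every) $y_\chi$. If all $y_\chi$ are odd, then all $16$ of them are $\equiv \pm 1 \pmod{\text{something}}$ and the product is odd; I would show the product is $\equiv 1 \pmod{16}$ — this recovers the family $16m+1$ — by tracking the quadratic-residue structure of a product of $16$ odd integers subject to the Hadamard linear relations (this is the same phenomenon as the ``$8m+1$'' and ``$4m+1$'' families in the ${\rm C}_2^2, {\rm C}_2^3$ cases, and one expects $16m+1$ here; conversely every such value is attained by a near-identity choice). If all $y_\chi$ are even, write $y_\chi = 2 z_\chi$; then $\prod y_\chi = 2^{16}\prod z_\chi$, and the congruences on $(y_\chi)$ translate into congruences on $(z_\chi)$ — essentially $(z_\chi) \in L' $ for a related but looser lattice, together with one surviving mod-$8$ relation $\sum_\chi \pm z_\chi \equiv 0 \pmod 8$. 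One then iterates: either all $z_\chi$ odd (giving the $2^{16}(4m+1)$ family, and the finer $2^{24}$-families once an extra factor of $2^8$ is forced), or all even, peeling off another $2^{16}$, and so on, but the relations degrade so that after the first peel the modulus controlling $\prod z_\chi$ drops from $16$ to $8$ to $4$, which is the source of the $4m+1$ versus $8m+3$ versus unrestricted-$m$ trichotomy and of the exponents $16, 24, 26$. The appearance of the set $A = \{(8k-3)(8l+3)\}$ reflects that in the ``one factor peeled'' regime the odd part is itself a product of two odd group determinants of ${\rm C}_2^3$-type blocks (an index-$2$ subgroup decomposition), each contributing a residue class mod $8$, and not every residue $\equiv -1 \pmod 8$ is a product of a $\equiv 5$ and a $\equiv 3$ class.

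Concretely I would: (i) prove the forward inclusion $S({\rm C}_2^4) \subseteq \{\dots\}$ by the lattice/congruence bookkeeping above, being careful that the mod-$16$, mod-$8$, mod-$4$ constraints on the various products are genuinely forced and nothing stronger holds; (ii) prove the reverse inclusion by exhibiting, for each listed value, an explicit integer vector $(x_g)$ realizing it — here the block structure ${\rm C}_2^4 = {\rm C}_2^3 \times {\rm C}_2$ is the organizing tool, since $\det_{{\rm C}_2^4} = \det_{{\rm C}_2^3}(\text{sum block})\cdot\det_{{\rm C}_2^3}(\text{difference block})$, letting me lift constructions from the known $S({\rm C}_2^3)$ and also handle the factorization $(8k-3)(8l+3)$ in $A$ directly. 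The main obstacle is step (i): showing the congruence obstructions are \emph{exactly} those listed — i.e. that no hidden further constraint exists in the all-even sub-cases — which requires a careful analysis of the Smith normal form / reduction behaviour of the order-$16$ Hadamard relations under repeated division by $2$, and pinning down precisely why the achievable odd parts form the set $A$ and not all of $8m-1$. A secondary nuisance is the sheer size of the case tree ($16$ characters, several layers of halving), which I would tame by exploiting the ${\rm C}_2^3 \times {\rm C}_2$ recursion so that each layer reduces to already-understood ${\rm C}_2^3$ data rather than a fresh $16$-variable computation.
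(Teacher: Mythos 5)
Your overall strategy --- factor the determinant over the $16$ characters (equivalently, iterate the sum/difference decomposition $D_{n+1}=D_n\cdot D_n$), characterize the image lattice of the Hadamard transform by congruences, and combine a congruence analysis for the forward inclusion with explicit constructions for the reverse --- is essentially the paper's strategy. But the proposal has a genuine gap, and also a structural error in the one place where you commit to details. After writing $y_\chi=2z_\chi$ in the all-even case, you describe the next layer as a dichotomy: ``either all $z_\chi$ odd \dots or all even, peeling off another $2^{16}$.'' This is false: the $z_\chi$ need not share a parity (e.g.\ $a_0=3$, $a_1=1$, all other $a_g=0$ gives $y_\chi\in\{2,4\}$ and determinant $2^{24}$), and it is precisely the mixed-parity configurations that produce the $2^{24}$ and $2^{26}$ families, while the valuations $2^{17},\dots,2^{23},2^{25}$ must be excluded by a case analysis on which sub-blocks of the transform drop parity, not by uniform repeated halving. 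The paper handles this by grouping the $16$ linear forms into four $D_2$-blocks $D_2(b),D_2(c),D_2(d),D_2(e)$, recording the lattice constraints $b_i\equiv c_i\equiv d_i\equiv e_i\pmod 2$ and $b_i+c_i+d_i+e_i\equiv 0\pmod 4$, and computing the possible $2$-adic valuations and odd-part residues of each block separately (its Lemmas 3.3--3.6).

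The more serious gap is the one you yourself flag as ``the main obstacle'': proving that the odd parts of the $2^{24}$-values are exactly $\{4m+1\}\cup\{8m+3\}\cup A$ and nothing more. Your heuristic for $A$ (a product of a residue $\equiv 5$ and a residue $\equiv 3$ modulo $8$) explains why $A\subsetneq\{8m-1\}$, but the actual exclusion of $2^{24}m$ for $m\equiv -1\pmod 8$ with $m\notin A$ requires a parity-counting argument across all four blocks simultaneously: one must show that the sign/residue patterns of the four $D_2$-factors that would produce such an $m$ are incompatible with the lattice relations $\sum_i k_i\equiv\sum_i l_i\equiv\sum_i m_i\equiv\sum_i n_i\equiv 0\pmod 2$ (the paper's Lemmas 4.5 and 4.6). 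Nothing in your plan supplies this argument; invoking the Smith normal form of the Hadamard relations does not by itself produce it. Since this forward inclusion is the substantive content of the theorem, the proposal as it stands is a plausible roadmap rather than a proof.
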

Let $\mathbb{Z}_{\rm odd}$ be the set of all odd numbers. 
We remark that all of the odd values in $S \left( {\rm C}_{2}^{4} \right) $ are already known: 
{\it $S \left( {\rm C}_{2}^{n} \right) \cap \mathbb{Z}_{\rm odd} = \left\{ 2^{n} m + 1 \mid m \in \mathbb{Z} \right\}$ holds for any $n$}. 
For example, see \cite[Lemmas~2.1 and 2.2]{MR3182012}. 
Although $S \left( {\rm C}_{2}^{4} \right) \cap \mathbb{Z}_{\rm odd} = \left\{ 16 m + 1 \mid m \in \mathbb{Z} \right\}$ is not new result, 
we give a proof in our method. 

For every group $G$ of order at most $15$, 
$S(G)$ was determined (see \cite{MR4363104}, \cite{MR4056860}). 
For the groups of order $16$, 
the complete descriptions of $S(G)$ were obtained for the dihedral group \cite[Theorem~5.3]{MR3879399} and the cyclic group \cite{Yamaguchi}. 
There are fourteen groups of order $16$ up to isomorphism \cite{MR1510814}, \cite{MR1505615}. 
Theorem~$\ref{thm:1.1}$ determines $S(G)$ for one of the unsolved twelve groups.


\section{Relations with group determinants of subgroups}
For any $g = (\overline{\varepsilon_{0}}, \overline{\varepsilon_{1}}, \ldots, \overline{\varepsilon_{n - 1}}) \in {\rm C}_{2}^{n}$ with $\varepsilon_{i} \in \{ 0, 1 \}$, we denote the variable $x_{g}$ by $x_{j}$, where $j := \varepsilon_{n - 1} \cdot 2^{n - 1} + \varepsilon_{n - 2} \cdot 2^{n - 2} + \cdots + \varepsilon_{0} \cdot 2^{0}$, and let 
$$
D_{n}(x_{0}, x_{1}, \ldots, x_{2^{n} - 1}) := \det{\left( x_{g h^{- 1}} \right)}_{g, h \in {\rm C}_{2}^{n}}. 
$$
From the $H = {\rm C}_{2}^{n - 1}$ and $K = {\rm C}_{2}$ case of \cite[Theorem~1.1]{https://doi.org/10.48550/arxiv.2202.06952}, 
we have the following corollary. 

\begin{cor}\label{cor:2.1}
For any positive integer $n$, 
$$
D_{n + 1}(x_{0}, \ldots, x_{2^{n + 1} - 1}) = D_{n}(x_{0} + x_{2^{n}}, \ldots, x_{2^{n} - 1} + x_{2^{n + 1} - 1}) D_{n}(x_{0} - x_{2^{n}}, \ldots, x_{2^{n} - 1} - x_{2^{n + 1} - 1}). 
$$
\end{cor}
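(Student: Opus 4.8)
The plan is to derive Corollary~\ref{cor:2.1} directly from the quoted result \cite[Theorem~1.1]{https://doi.org/10.48550/arxiv.2202.06952}, specialized to $H = {\rm C}_2^{n-1}$ and $K = {\rm C}_2$, so that the argument is essentially a matter of unwinding notation and checking that the indexing convention introduced just above the corollary matches the factorization. First I would recall that ${\rm C}_2^{n+1} \cong {\rm C}_2^n \times {\rm C}_2$, writing a general element as $(h, \overline{\varepsilon_n})$ with $h \in {\rm C}_2^n$ and $\varepsilon_n \in \{0,1\}$; under the binary labelling fixed in the paper, the variable attached to $(h, \overline 0)$ is some $x_j$ with $0 \le j \le 2^n - 1$, and the variable attached to $(h, \overline 1)$ is $x_{j + 2^n}$, because the extra coordinate $\varepsilon_n$ contributes $\varepsilon_n \cdot 2^n$ to the index. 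This is exactly the pairing $x_j \leftrightarrow x_{j+2^n}$ appearing in the statement.

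Next I would invoke the general factorization: for a group that is a direct product $H \times K$ with $K = {\rm C}_2 = \{e, \sigma\}$, the group determinant of $H \times K$ factors as the product over the two characters $\chi$ of ${\rm C}_2$ of the group determinant of $H$ evaluated at the "twisted" variables $\sum_{k \in K} \chi(k)\, x_{(h,k)}$. For the trivial character this gives $D_n$ evaluated at $x_j + x_{j+2^n}$ for $0 \le j \le 2^n - 1$; for the sign character it gives $D_n$ evaluated at $x_j - x_{j+2^n}$. Putting the two factors together yields precisely
\[
D_{n+1}(x_0, \ldots, x_{2^{n+1}-1}) = D_n(x_0 + x_{2^n}, \ldots, x_{2^n - 1} + x_{2^{n+1}-1})\, D_n(x_0 - x_{2^n}, \ldots, x_{2^n - 1} - x_{2^{n+1}-1}),
\]
which is the assertion. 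The only genuine content beyond citing the theorem is verifying that the block structure of the matrix $\left(x_{gh^{-1}}\right)_{g,h \in {\rm C}_2^{n+1}}$, when rows and columns are ordered by the binary labelling, really does decompose along the ${\rm C}_2$-coset of the last coordinate so that the cited theorem applies verbatim with $H = {\rm C}_2^{n-1}$, $K = {\rm C}_2$ — i.e., that the ordering convention here is compatible with the one in \cite{https://doi.org/10.48550/arxiv.2202.06952}.

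Since the corollary is explicitly flagged as "the $H = {\rm C}_2^{n-1}$ and $K = {\rm C}_2$ case" of an already-published theorem, I would keep the proof short: state the reduction, note the identification of the direct-product decomposition with the binary index splitting at bit $2^n$, and observe that the two ${\rm C}_2$-characters produce exactly the sum and difference substitutions. The main (and really only) obstacle is bookkeeping: making sure the index $j$ ranges correctly and that "$x_{2^n - 1} \pm x_{2^{n+1}-1}$" is the last argument, matching up the ordered lists on both sides; there is no analytic or structural difficulty, just a careful check that the conventions align. I would therefore present this as a one-paragraph deduction from the cited theorem rather than reproving the direct-product factorization from scratch.
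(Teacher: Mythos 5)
Your proposal is correct and matches the paper exactly: the paper offers no independent proof of Corollary~\ref{cor:2.1}, deriving it verbatim as the $K = {\rm C}_{2}$ specialization of the cited factorization theorem, with the binary indexing ensuring that the extra coordinate contributes $2^{n}$ to the subscript so that the trivial and sign characters of ${\rm C}_{2}$ produce the sum and difference substitutions. Your character-by-character unwinding is just a slightly more explicit version of the same one-line deduction (note only that the decomposition you actually use is ${\rm C}_{2}^{n+1} \cong {\rm C}_{2}^{n} \times {\rm C}_{2}$, i.e.\ $H = {\rm C}_{2}^{n}$, matching the statement for $D_{n+1}$).
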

Throughout this paper, 
we assume that $a_{0}, a_{1}, \ldots, a_{15} \in \mathbb{Z}$ and, for any $0 \leq i \leq 3$, 
put 
\begin{align*}
b_{i} &:= (a_{i} + a_{i + 8}) + (a_{i + 4} + a_{i + 12}), & c_{i} &:= (a_{i} + a_{i + 8}) - (a_{i + 4} + a_{i + 12}), \\ 
d_{i} &:= (a_{i} - a_{i + 8}) + (a_{i + 4} - a_{i + 12}), & e_{i} &:= (a_{i} - a_{i + 8}) - (a_{i + 4} - a_{i + 12}). 
\end{align*}

The following relations will be frequently used in this paper: 
\begin{align*}
D_{4}(a_{0}, a_{1}, \ldots, a_{15}) 
&= D_{3}(a_{0} + a_{8}, \ldots, a_{7} + a_{15}) D_{3}(a_{0} - a_{8}, \ldots, a_{7} - a_{15}), \\ 
&= D_{2}(b_{0}, b_{1}, b_{2}, b_{3}) D_{2}(c_{0}, c_{1}, c_{2}, c_{3}) D_{2}(d_{0}, d_{1}, d_{2}, d_{3}) D_{2}(e_{0}, e_{1}, e_{2}, e_{3}). 
\end{align*}

\begin{rem}\label{rem:2.2}
The following hold: 
\begin{enumerate}
\item[$(1)$] $b_{i} \equiv c_{i} \equiv d_{i} \equiv e_{i} \pmod{2} \: \: \text{for} \: \: 0 \leq i \leq 3$; 
\item[$(2)$] $b_{i} + c_{i} + d_{i} + e_{i} \equiv 0 \pmod{4} \: \: \text{for} \: \: 0 \leq i \leq 3$. 
\end{enumerate}
\end{rem}

Noting that $\det{(\alpha_{s t})_{s, t}} \equiv \det{(\beta_{s t})_{s, t}} \pmod{2}$ holds if $\alpha_{s t} \equiv \beta_{s t} \pmod{2}$ for any $s$ and $t$, 
we have the following lemma. 

\begin{lem}\label{lem:2.3}
The following hold: 
\begin{enumerate}
\item[$(1)$] $D_{4}(a_{0}, a_{1}, \ldots, a_{15}) \equiv D_{3}(a_{0} + a_{8}, \ldots, a_{7} + a_{15}) \equiv D_{3}(a_{0} - a_{8}, \ldots, a_{7} - a_{15}) \pmod{2}$; 
\item[$(2)$] $D_{4}(a_{0}, a_{1}, \ldots, a_{15}) \equiv D_{2}(b_{0}, b_{1}, b_{2}, b_{3}) \equiv D_{2}(c_{0}, c_{1}, c_{2}, c_{3}) $\\ 
\hspace{3.36cm}$\equiv D_{2}(d_{0}, d_{1}, d_{2}, d_{3}) \equiv D_{2}(e_{0}, e_{1}, e_{2}, e_{3}) \pmod{2}$. 
\end{enumerate}
\end{lem}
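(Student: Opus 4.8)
The plan is to deduce both parts from Corollary~\ref{cor:2.1} together with the two displayed factorizations of $D_{4}$ recorded just after it, combined with two elementary facts: $x^{2} \equiv x \pmod{2}$ for every $x \in \mathbb{Z}$, and a determinant depends only on the residues modulo $2$ of its entries (the observation stated immediately before the lemma).

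For part~$(1)$, I would apply Corollary~\ref{cor:2.1} with $n = 3$ to write $D_{4}(a_{0}, \ldots, a_{15}) = D_{3}(a_{0} + a_{8}, \ldots, a_{7} + a_{15}) \, D_{3}(a_{0} - a_{8}, \ldots, a_{7} - a_{15})$. Since $a_{i} + a_{i+8} \equiv a_{i} - a_{i+8} \pmod{2}$ for each $i$, the two matrices realizing $D_{3}(a_{0} + a_{8}, \ldots)$ and $D_{3}(a_{0} - a_{8}, \ldots)$ agree entrywise modulo $2$, hence $D_{3}(a_{0} + a_{8}, \ldots) \equiv D_{3}(a_{0} - a_{8}, \ldots) \pmod{2}$. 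Therefore $D_{4} \equiv D_{3}(a_{0} + a_{8}, \ldots)^{2} \equiv D_{3}(a_{0} + a_{8}, \ldots) \pmod{2}$, and likewise $D_{4} \equiv D_{3}(a_{0} - a_{8}, \ldots) \pmod{2}$.

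For part~$(2)$, I would use the factorization $D_{4}(a_{0}, \ldots, a_{15}) = D_{2}(b_{0}, b_{1}, b_{2}, b_{3}) \, D_{2}(c_{0}, c_{1}, c_{2}, c_{3}) \, D_{2}(d_{0}, d_{1}, d_{2}, d_{3}) \, D_{2}(e_{0}, e_{1}, e_{2}, e_{3})$. By Remark~\ref{rem:2.2}$(1)$ we have $b_{i} \equiv c_{i} \equiv d_{i} \equiv e_{i} \pmod{2}$, so the four matrices realizing $D_{2}(b)$, $D_{2}(c)$, $D_{2}(d)$, $D_{2}(e)$ agree entrywise modulo $2$, and hence these four values are mutually congruent modulo $2$. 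Consequently $D_{4} \equiv D_{2}(b_{0}, b_{1}, b_{2}, b_{3})^{4} \equiv D_{2}(b_{0}, b_{1}, b_{2}, b_{3}) \pmod{2}$, and the remaining congruences follow. (Equivalently, one could simply iterate part~$(1)$ one level down, applying Corollary~\ref{cor:2.1} with $n = 2$ to each of $D_{3}(a_{0} + a_{8}, \ldots)$ and $D_{3}(a_{0} - a_{8}, \ldots)$.)

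I do not expect a genuine obstacle here. The only mild subtlety is bookkeeping: tracking which of $b, c$ (resp.\ $d, e$) arises from the ``sum'' half $a_{i} + a_{i+8}$ and which from the ``difference'' half when Corollary~\ref{cor:2.1} is applied the second time. But since part~$(2)$ only asserts congruences modulo $2$ and all of $b_{i}, c_{i}, d_{i}, e_{i}$ coincide modulo $2$, this bookkeeping does not affect the argument at all.
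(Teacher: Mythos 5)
Your argument is correct and is exactly the one the paper intends: the observation stated just before the lemma (determinants of entrywise-congruent matrices agree mod $2$), the factorizations from Corollary~\ref{cor:2.1}, and $x^{2}\equiv x\pmod{2}$. No differences worth noting.
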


\begin{lem}\label{lem:2.4}
We have 
$D_{4}(a_{0}, a_{1}, \ldots, a_{15}) \in 2 \mathbb{Z} \iff b_{0} + b_{2} \equiv b_{1} + b_{3} \pmod{2}$. 
\end{lem}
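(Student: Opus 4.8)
The plan is to reduce everything to a statement about $D_2$ modulo $2$ and then compute that small determinant explicitly. By Lemma~\ref{lem:2.3}(2) we have $D_4(a_0,\ldots,a_{15}) \equiv D_2(b_0,b_1,b_2,b_3) \pmod 2$, so $D_4 \in 2\mathbb{Z}$ if and only if $D_2(b_0,b_1,b_2,b_3)$ is even. Thus the whole lemma is equivalent to the claim that, for integers $y_0,y_1,y_2,y_3$,
\[
D_2(y_0,y_1,y_2,y_3) \equiv 0 \pmod 2 \iff y_0 + y_2 \equiv y_1 + y_3 \pmod 2.
\]

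The key step is therefore to write down $D_2$ explicitly. Using Corollary~\ref{cor:2.1} with $n=1$ (or the known factorization of the ${\rm C}_2^2$ group determinant) one gets
\[
D_2(y_0,y_1,y_2,y_3) = (y_0+y_1+y_2+y_3)(y_0-y_1+y_2-y_3)(y_0+y_1-y_2-y_3)(y_0-y_1-y_2+y_3).
\]
Modulo $2$ each of the four linear factors is congruent to $y_0+y_1+y_2+y_3$, so
\[
D_2(y_0,y_1,y_2,y_3) \equiv (y_0+y_1+y_2+y_3)^4 \equiv y_0+y_1+y_2+y_3 \pmod 2.
\]
Hence $D_2(y_0,y_1,y_2,y_3)$ is even if and only if $y_0+y_1+y_2+y_3 \equiv 0 \pmod 2$, which is exactly the condition $y_0+y_2 \equiv y_1+y_3 \pmod 2$.

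Combining the two displays finishes the proof: $D_4(a_0,\ldots,a_{15})$ is even if and only if $b_0+b_1+b_2+b_3 \equiv 0 \pmod 2$, i.e. $b_0+b_2\equiv b_1+b_3\pmod 2$. I do not anticipate a genuine obstacle here; the only thing to be slightly careful about is making sure the indexing convention for $D_2$ matches the one fixed in Section~\ref{section.2} so that the factorization of $D_2$ is the correct one, and that the reduction ``$\det$ mod $2$ depends only on entries mod $2$'' (already invoked before Lemma~\ref{lem:2.3}) is being used legitimately. Everything else is a one-line computation with the explicit product formula for $D_2$.
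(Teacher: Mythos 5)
Your proof is correct and follows essentially the same route as the paper: both reduce to $D_2(b_0,b_1,b_2,b_3)$ modulo $2$ via the congruence of Lemma~2.3(2) and then read off the parity from the factorization of $D_2$ coming from Corollary~2.1 (the paper stops at the difference-of-squares form $\{(b_0+b_2)^2-(b_1+b_3)^2\}\{(b_0-b_2)^2-(b_1-b_3)^2\}$, while you split further into four linear factors, which is an immaterial difference). No gaps.
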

\begin{proof}
From Lemma~$\ref{lem:2.3}$~(2) and 
\begin{align*}
D_{2}(b_{0}, b_{1}, b_{2}, b_{3}) 
&= D_{1}(b_{0} + b_{2}, b_{1} + b_{3}) D_{1}(b_{0} - b_{2}, b_{1} - b_{3}) \\
&= \left\{ (b_{0} + b_{2})^{2} - (b_{1} + b_{3})^{2} \right\} \left\{ (b_{0} - b_{2})^{2} - (b_{1} - b_{3})^{2} \right\} \\ 
&\in 
\begin{cases}
2 \mathbb{Z}, & b_{0} + b_{2} \equiv b_{1} + b_{3} \pmod{2}, \\ 
\mathbb{Z}_{\rm odd}, & b_{0} + b_{2} \not\equiv b_{1} + b_{3} \pmod{2}, 
\end{cases}
\end{align*}
the lemma is proved. 
\end{proof}

\section{Group determinant of ${\rm C}_{2}^{2}$}
In this section, we prove five lemmas which give properties of the group determinant of ${\rm C}_{2}^{2}$. 
These lemmas are used in the next section. 

By direct calculation, we have the following lemma. 
\begin{lem}\label{lem:3.1}
The identity $D_{2}(x_{0}, x_{1}, x_{2}, x_{3}) = \sum_{i = 0}^{3} x_{i}^{4} - 2 \sum_{0 \leq i < j \leq 3} x_{i}^{2} x_{j}^{2} + 8 x_{0} x_{1} x_{2} x_{3}$ holds. 
Therefore, $D_{2}(x_{0}, x_{1}, x_{2}, x_{3})$ is a symmetric polynomial in $x_{0}, x_{1}, x_{2}, x_{3}$. 
\end{lem}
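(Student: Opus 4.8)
The plan is to compute $D_{2}(x_{0},x_{1},x_{2},x_{3})$ directly as the determinant of the $4\times 4$ matrix obtained from the group $\mathrm{C}_{2}^{2}$ and then simplify. First I would write down the matrix explicitly. Ordering the elements of $\mathrm{C}_{2}^{2}$ as $0,1,2,3$ in the binary convention fixed in Section~2, the $(g,h)$ entry is $x_{g h^{-1}} = x_{g \oplus h}$ where $\oplus$ denotes bitwise XOR (since every element of $\mathrm{C}_{2}^{2}$ is its own inverse). This gives the symmetric matrix
\begin{equation*}
M = \begin{pmatrix} x_{0} & x_{1} & x_{2} & x_{3} \\ x_{1} & x_{0} & x_{3} & x_{2} \\ x_{2} & x_{3} & x_{0} & x_{1} \\ x_{3} & x_{2} & x_{1} & x_{0} \end{pmatrix},
\end{equation*}
which is the well-known form of a $\mathrm{C}_{2}^{2}$-group matrix.

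The cleanest route is to diagonalize $M$ by the characters of $\mathrm{C}_{2}^{2}$, equivalently to use the factorization already available in the paper. Indeed, Corollary~\ref{cor:2.1} with $n=1$ gives
\begin{equation*}
D_{2}(x_{0},x_{1},x_{2},x_{3}) = D_{1}(x_{0}+x_{2},\,x_{1}+x_{3})\, D_{1}(x_{0}-x_{2},\,x_{1}-x_{3}),
\end{equation*}
and $D_{1}(u,v) = \det\begin{pmatrix} u & v \\ v & u \end{pmatrix} = u^{2}-v^{2}$. Hence
\begin{equation*}
D_{2}(x_{0},x_{1},x_{2},x_{3}) = \bigl[(x_{0}+x_{2})^{2}-(x_{1}+x_{3})^{2}\bigr]\bigl[(x_{0}-x_{2})^{2}-(x_{1}-x_{3})^{2}\bigr].
\end{equation*}
Alternatively, the four eigenvalues of $M$ are $x_{0}+x_{1}+x_{2}+x_{3}$, $x_{0}-x_{1}+x_{2}-x_{3}$, $x_{0}+x_{1}-x_{2}-x_{3}$, $x_{0}-x_{1}-x_{2}+x_{3}$, and the determinant is their product; this is the same expression after pairing the factors.

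It then remains to expand this product and collect terms, which I would do by treating it as a polynomial identity: set $p = x_{0}+x_{2}$, $q = x_{1}+x_{3}$, $r = x_{0}-x_{2}$, $s = x_{1}-x_{3}$, so the expression is $(p^{2}-q^{2})(r^{2}-s^{2}) = p^{2}r^{2} - p^{2}s^{2} - q^{2}r^{2} + q^{2}s^{2}$, and substitute back. The degree-$4$ monomials in $x_{0},x_{1},x_{2},x_{3}$ that survive are exactly $\sum x_{i}^{4}$, the squared pairs $x_{i}^{2}x_{j}^{2}$, and the single product $x_{0}x_{1}x_{2}x_{3}$; tracking coefficients gives $1$ for each $x_{i}^{4}$, $-2$ for each $x_{i}^{2}x_{j}^{2}$, and $+8$ for $x_{0}x_{1}x_{2}x_{3}$ (the latter coming only from the cross terms in $p^{2}r^{2}$ and $q^{2}s^{2}$ contributing $+2$ each and from $-p^{2}s^{2}$, $-q^{2}r^{2}$ contributing... one checks the net is $+8$). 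Symmetry in $x_{0},\dots,x_{3}$ is then immediate from the resulting formula. This is entirely routine; there is no real obstacle beyond bookkeeping, so I would simply state that the identity follows ``by direct calculation,'' as the paper does, and note the symmetry as a corollary of the explicit expression.
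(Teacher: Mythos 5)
Your proposal is correct and takes essentially the same route as the paper, which simply asserts the identity ``by direct calculation'': you factor $D_{2}$ through two copies of $D_{1}$ via Corollary~\ref{cor:2.1} and expand, and the expansion does yield $\sum_{i}x_{i}^{4}-2\sum_{i<j}x_{i}^{2}x_{j}^{2}+8x_{0}x_{1}x_{2}x_{3}$. One small correction to your parenthetical: the $8x_{0}x_{1}x_{2}x_{3}$ term does not come from $p^{2}r^{2}$ or $q^{2}s^{2}$ (those involve only $\{x_{0},x_{2}\}$ and $\{x_{1},x_{3}\}$ respectively); it comes entirely from $-p^{2}s^{2}$ and $-q^{2}r^{2}$, each contributing $-(-4x_{0}x_{1}x_{2}x_{3})=+4x_{0}x_{1}x_{2}x_{3}$.
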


\begin{lem}\label{lem:3.2}
For any $k, l, m, n \in \mathbb{Z}$, 
the following hold: 
\begin{enumerate}
\item[$(1)$] $D_{2}(2k, 2l, 2m, 2n + 1) \equiv 8 (k + l + m) + 1 \pmod{16}$; 
\item[$(2)$] $D_{2}(2k, 2 l + 1, 2 m + 1, 2 n + 1) \equiv 8 k - 3 \pmod{16}$. 
\end{enumerate}
\end{lem}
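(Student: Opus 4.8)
The plan is to use the explicit symmetric-polynomial formula from Lemma~\ref{lem:3.1} and reduce everything modulo $16$. Write $D_{2}(x_{0}, x_{1}, x_{2}, x_{3}) = \sum_{i} x_{i}^{4} - 2 \sum_{i < j} x_{i}^{2} x_{j}^{2} + 8 x_{0} x_{1} x_{2} x_{3}$, and recall the two facts we need about squares: for any integer $t$ one has $t^{2} \equiv 0 \pmod{4}$ if $t$ is even and $t^{2} \equiv 1 \pmod{8}$ if $t$ is odd; hence $t^{4} \equiv 0 \pmod{16}$ if $t$ is even and $t^{4} \equiv 1 \pmod{16}$ if $t$ is odd. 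These, together with the fact that $8 x_{0}x_{1}x_{2}x_{3} \equiv 0 \pmod{16}$ whenever any one of the $x_{i}$ is even, will make each term manageable mod $16$.

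For part~$(1)$, substitute $(x_0,x_1,x_2,x_3)=(2k,2l,2m,2n+1)$. The quartic terms give $\sum_i x_i^4 \equiv (2n+1)^4 \equiv 1 \pmod{16}$ since the three even entries contribute multiples of $16$. The product term $8x_0x_1x_2x_3$ has the even factor $2k$ (indeed three even factors), so it is $\equiv 0 \pmod{16}$. The only subtle contribution is $-2\sum_{i<j}x_i^2 x_j^2$: any pair $\{i,j\}$ with both indices in $\{0,1,2\}$ gives $x_i^2 x_j^2 \equiv 0 \pmod{16}$ (each square is $\equiv 0 \pmod 4$, so the product is $\equiv 0\pmod{16}$), so after multiplying by $-2$ it still vanishes mod $16$; the three pairs involving index $3$ give $-2\bigl((2k)^2+(2l)^2+(2m)^2\bigr)(2n+1)^2 \equiv -8(k^2+l^2+m^2)(2n+1)^2 \pmod{16}$, and since $(2n+1)^2$ is odd and $t^2\equiv t \pmod 2$, this is $\equiv -8(k+l+m)\pmod{16}$, i.e. $\equiv 8(k+l+m)\pmod{16}$. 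Summing, $D_2 \equiv 8(k+l+m)+1 \pmod{16}$, as claimed. For part~$(2)$, substitute $(x_0,x_1,x_2,x_3)=(2k,2l+1,2m+1,2n+1)$: the quartic terms give $0 + 1 + 1 + 1 = 3 \pmod{16}$; the product term $8x_0x_1x_2x_3 = 8\cdot 2k\cdot(\text{odd})^3 \equiv 16k\cdot(\text{odd}) \equiv 0 \pmod{16}$; and for the middle term, pairs with both indices odd contribute $-2(\text{odd})(\text{odd})\equiv -2 \pmod{16}$ each (three such pairs, total $-6$), while the three pairs involving index $0$ contribute $-2(2k)^2(\text{odd})\equiv -8k^2 \equiv -8k \pmod{16}$ each, but wait---there is exactly one even index, so only pairs of the form $\{0,j\}$ involve $x_0$, and there are three of them, giving $-2(2k)^2\bigl((2l+1)^2+(2m+1)^2+(2n+1)^2\bigr) \equiv -8k\cdot 3 \equiv -24k \equiv 8k \pmod{16}$. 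Hence $D_2 \equiv 3 - 6 + 8k \equiv 8k - 3 \pmod{16}$.

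The main obstacle, such as it is, is purely bookkeeping: one must be careful to distinguish the $\pmod 4$ behavior of even squares from the $\pmod 8$ behavior of odd squares, since a naive reduction mod $16$ term-by-term loses the cross-terms $-2 x_i^2 x_j^2$ with exactly one even index, which survive mod $16$ as multiples of $8$. I would organize the computation by grouping the six pairs $\{i,j\}$ according to how many of $x_i,x_j$ are even, and by recording at the outset the congruences $t^2\equiv t\pmod 2$ and the mod-$8$/mod-$16$ facts for squares and fourth powers; then each case is a short substitution. Alternatively, one can just expand $D_2$ at the two specified parity patterns directly using Lemma~\ref{lem:3.1} and collect terms, which is equally routine. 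Either way the lemma follows by direct calculation modulo $16$.
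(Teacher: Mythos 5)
Your proof is correct, and it takes a genuinely different route from the paper's. The paper proves both congruences by first factoring $D_{2}(x_{0},x_{1},x_{2},x_{3}) = D_{1}(x_{0}+x_{2},x_{1}+x_{3})D_{1}(x_{0}-x_{2},x_{1}-x_{3})$, writing each $D_{1}$ factor as a difference of squares, and then multiplying the two factors out modulo $16$ using the fact that the product of two integers each congruent to $-1$ (resp.\ $+1$) modulo $4$ simplifies nicely (e.g.\ $(4A-1)(4B-1)\equiv -4A-4B+1 \pmod{16}$), before collecting terms with identities such as $n(n+1)\equiv 0 \pmod 2$. You instead expand the explicit quartic symmetric polynomial of Lemma~\ref{lem:3.1} and sort the six cross-terms $-2x_i^2x_j^2$ by the parity pattern of the pair, using $t^4\equiv 0$ or $1 \pmod{16}$ and $t^2\equiv t\pmod 2$. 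I checked your bookkeeping: in (1) the three mixed pairs contribute $-8(k^2+l^2+m^2)(2n+1)^2\equiv 8(k+l+m)$, and in (2) the totals $3-6+8k\equiv 8k-3$ are right. The paper's factorization approach has the advantage of being the same mechanism ($D_{n+1}=D_nD_n$) used everywhere else in the paper, so the proof is uniform with Lemmas~\ref{lem:3.3}--\ref{lem:3.6}; your approach trades that uniformity for a cleaner separation of terms by parity and makes the symmetry in $x_0,x_1,x_2,x_3$ (needed to reduce to the representative parity patterns) manifest from the outset. Both are routine verifications of the same congruences.
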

\begin{proof}
We have 
\begin{align*}
&D_{2}(2 k, 2 l, 2 m, 2 n + 1) \\ 
&\hspace{0.5cm} = D_{1}(2k + 2m, 2l + 2n + 1) D_{1}(2k - 2m, 2l - 2n - 1) \\ 
&\hspace{0.5cm} = \left\{ 4 (k + m)^{2} - 4 (l + n)^{2} - 4 (l + n) - 1 \right\} \left\{ 4 (k - m)^{2} - 4 (l - n)^{2} + 4 (l - n) - 1 \right\} \\ 
&\hspace{0.5cm} \equiv - 4 (k + m)^{2} + 4 (l + n)^{2} + 4 (l + n) - 4 (k - m)^{2} + 4 (l - n)^{2} - 4 (l - n) + 1 \\ 
&\hspace{0.5cm} \equiv - 8 k^{2} - 8 m^{2} + 8 l^{2} + 8 n ( n + 1 ) + 1 \\ 
&\hspace{0.5cm} \equiv 8 (k + l + m) + 1 \pmod{16}, \\ 
&D_{2}(2 k, 2 l + 1, 2 m + 1, 2 n + 1) \\ 
&\hspace{0.5cm} = D_{1}(2 k + 2 m + 1, 2 l + 2 n + 2) D_{1}(2 k - 2 m - 1, 2 l - 2 n) \\ 
&\hspace{0.5cm} = \left\{ 4 (k + m)^{2} + 4 (k + m) + 1 - 4 (l + n + 1)^{2} \right\} \left\{ 4 (k - m)^{2} - 4 (k - m) + 1 - 4 (l - n)^{2} \right\} \\ 
&\hspace{0.5cm} \equiv 4 (k + m)^{2} + 4 (k + m) - 4 (l + n + 1)^{2} + 4 (k - m)^{2} - 4 (k - m) - 4 (l - n)^{2} + 1 \\ 
&\hspace{0.5cm} \equiv 8 k^{2} + 8 m (m + 1) - 8 l (l + 1) - 8 n (n + 1) - 3 \\ 
&\hspace{0.5cm} \equiv 8 k - 3 \pmod{16}. 
\end{align*}
\end{proof}

\begin{lem}\label{lem:3.3}
For any $k, l, m, n \in \mathbb{Z}$, 
let $\alpha := D_{2}(2 k, 2 l, 2 m, 2 n)$. 
The following hold: 
\begin{enumerate}
\item[$(1)$] If $k + m \not\equiv l + n, \: k m \equiv l n \pmod{2}$, then $\alpha \in \left\{ 2^{4} (8 a + 1) \mid a \in \mathbb{Z} \right\}$; 
\item[$(2)$] If $k + m \not\equiv l + n, \: k m \not\equiv l n \pmod{2}$, then $\alpha \in \left\{ 2^{4} (8 a - 3) \mid a \in \mathbb{Z} \right\}$; 
\item[$(3)$] If $k, l, m, n$ are even and $k + m \not\equiv l + n \pmod{4}$, then $\alpha \in \left\{ 2^{8} (4 a + 1) \mid a \in \mathbb{Z} \right\}$; 
\item[$(4)$] If $k, l, m, n$ are odd and $k + m \not\equiv l + n \pmod{4}$, then $\alpha \in \left\{ 2^{8} (4 a - 1) \mid a \in \mathbb{Z} \right\}$; 
\item[$(5)$] If $k \equiv l \equiv m \equiv n \pmod{2}$ and $k + m \equiv l + n \pmod{4}$, then $\alpha \in 2^{12} \mathbb{Z}$; 
\item[$(6)$] If exactly two of $k, l, m, n$ are even numbers, then $\alpha \in 2^{10} \mathbb{Z}$. 
\end{enumerate}
\end{lem}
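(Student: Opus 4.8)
The plan is to reduce everything to the explicit formula for $D_{2}$ via the factorization
$D_{2}(2k,2l,2m,2n) = D_{1}(2k+2m,2l+2n)\,D_{1}(2k-2m,2l-2n)
= \bigl\{(2k+2m)^{2}-(2l+2n)^{2}\bigr\}\bigl\{(2k-2m)^{2}-(2l-2n)^{2}\bigr\}$,
which already pulls out a factor $2^{4}$: writing $p:=k+m$, $q:=l+n$, $r:=k-m$, $s:=l-n$, we get $\alpha = 2^{4}(p^{2}-q^{2})(r^{2}-s^{2}) = 2^{4}(p-q)(p+q)(r-s)(r+s)$. Note $p\equiv r$ and $q\equiv s\pmod 2$, and $p+r=2k$, $q+s=2l$ are even. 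For parts $(1)$ and $(2)$ the hypothesis $k+m\not\equiv l+n$ says $p\not\equiv q\pmod 2$, so each of $p-q$, $p+q$, $r-s$, $r+s$ is odd; hence $\alpha/2^{4}$ is odd, and one must identify it modulo $8$. Since an odd square is $\equiv 1\pmod 8$, we have $(p^{2}-q^{2})(r^{2}-s^{2})\equiv (1-q^{2})(1-s^{2})\pmod 8$ when $p,r$ are odd, and $\equiv (p^{2}-1)(r^{2}-1)\pmod 8$ when $q,s$ are odd; in the first case $q,s$ even so $q^{2}\equiv s^{2}\equiv 0$ or $4$, giving a value $\equiv 1$ or $\equiv -3\pmod 8$ according to the parity of $q/2$ and $s/2$, i.e.\ according to whether $ln$ (equivalently $km$) is even or odd — this is exactly the dichotomy between $(1)$ and $(2)$. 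The symmetric case is handled the same way.

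For parts $(3)$ and $(4)$, all of $k,l,m,n$ have the same parity, so $p,q,r,s$ are all even; write $p=2p'$ etc., so $\alpha = 2^{4}\cdot 16\,(p'^{2}-q'^{2})(r'^{2}-s'^{2}) = 2^{8}(p'-q')(p'+q')(r'-s')(r'+s')$. Now $p'+r' = k$, $q'+s' = l$, so in case $(3)$ ($k,l,m,n$ even) $p',r'$ have one parity and $q',s'$ another is not forced; rather the hypothesis $k+m\not\equiv l+n\pmod 4$ means $p\not\equiv q\pmod 4$, i.e.\ $p'\not\equiv q'\pmod 2$, so again all four linear factors $p'\pm q'$, $r'\pm s'$ are odd and $\alpha/2^{8}$ is odd; reducing mod $4$ (an odd square is $\equiv 1\pmod 8$, so mod $4$ the product of two such differences-of-squares is $\equiv 1$) pins it down to $4a+1$ in case $(3)$. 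Case $(4)$ ($k,l,m,n$ odd) differs because then $p',q',r',s'$ satisfy $p'+r'=k$ odd, so $p',r'$ have opposite parity, likewise $q',s'$; combined with $p'\not\equiv q'$ this forces exactly the configuration making $\alpha/2^{8}\equiv -1\pmod 4$ — the bookkeeping of which of $p'\pm q', r'\pm s'$ is even is where I would be most careful, using that exactly two of the four factors are even, each contributing one factor of $2$, which I expect to cancel against a compensating analysis. (If instead all four were odd one would land in $4a+1$; the hypotheses in $(3)$ vs $(4)$ are precisely engineered to separate these.)

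For part $(5)$: $k\equiv l\equiv m\equiv n\pmod 2$ gives $p,q,r,s$ all even, factoring out $2^{8}$ as above; the extra hypothesis $k+m\equiv l+n\pmod 4$, i.e.\ $p\equiv q\pmod 4$, makes $p'\equiv q'\pmod 2$, so $p'-q'$ and (since $p'+q'\equiv 2p'$) also $p'+q'$ are both even, contributing $2^{2}$; the same for $r'-s'$, $r'+s'$, contributing another $2^{2}$; total $2^{8}\cdot 2^{2}\cdot 2^{2} = 2^{12}$, so $\alpha\in 2^{12}\mathbb{Z}$. For part $(6)$: say exactly two of $k,l,m,n$ are even; then among $p=k+m$, $r=k-m$ and $q=l+n$, $s=l-n$, I distinguish the cases \{$k,l$ even\}, \{$k,m$ even\} (and permutations) — if the two even ones are $k,m$ then $p,r$ are even and $q,s$ are odd (or vice versa), giving $2^{4}\cdot 2^{2} = 2^{6}$ only, which is not enough, so in fact this case cannot have ``exactly two even'' split as a $D_{1}$-pair; whereas if the two even ones are $k,l$ (one from each $D_{1}$-factor), then $p\equiv r\equiv$ odd and $q\equiv s\equiv$ odd, and $p^{2}-q^{2}$, $r^{2}-s^{2}$ are each $\equiv 0\pmod 8$ (difference of two odd squares), so $\alpha\in 2^{4}\cdot 2^{3}\cdot 2^{3}=2^{10}\mathbb{Z}$. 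I would organize $(6)$ by checking that ``exactly two even'' always forces one even in each $\{k,m\}$-versus-$\{l,n\}$ grouping after using the symmetry of $D_{2}$ (Lemma~\ref{lem:3.1}) to relabel, so the good sub-case is the only one.

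The main obstacle I anticipate is the careful $2$-adic valuation bookkeeping in $(4)$ and $(6)$: one must track precisely how many factors of $2$ come from each of the four linear forms $p\pm q$, $r\pm s$ (and their primed analogues), using repeatedly that $x^{2}\equiv 1\pmod 8$ for odd $x$ and that the sum of the two members of each $\pm$-pair is even. The symmetry of $D_{2}$ from Lemma~\ref{lem:3.1} is what lets me assume a convenient labeling in each case rather than enumerate all $\binom{4}{2}$ or $2^{4}$ parity patterns by hand.
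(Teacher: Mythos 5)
Your overall route is the paper's: factor $D_{2}$ through two $D_{1}$'s, pull out $2^{4}$ (resp.\ $2^{8}$ after halving), and read off residues using that an odd square is $\equiv 1 \pmod 8$. Parts (1)--(3) and (5) are sound (the paper does (1),(2) more slickly via the identity $(k+m)^{2}-(l+n)^{2}=(k-m)^{2}-(l-n)^{2}+4(km-ln)$, but your mod-$8$ expansion reaches the same place; note only that in the sub-case $p,r$ odd one automatically has $km$ even, so the dichotomy is carried by $ln$ alone, not by ``$ln$ equivalently $km$''). However, your treatment of (4) contains a genuine error of mechanism. With $k,l,m,n$ all odd, set $p'=(k+m)/2$, $q'=(l+n)/2$, $r'=(k-m)/2$, $s'=(l-n)/2$; then $p'+r'=k$ and $q'+s'=l$ are odd, so $p'\not\equiv r'$ and $q'\not\equiv s'\pmod 2$, and the hypothesis gives $p'\not\equiv q'$, whence also $r'\not\equiv s'$. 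Consequently \emph{all four} linear factors $p'\pm q'$, $r'\pm s'$ are odd: your claim that ``exactly two of the four factors are even, each contributing one factor of $2$'' is false, and there is no cancellation to perform. Your parenthetical ``if instead all four were odd one would land in $4a+1$'' is likewise false, since all four are odd here and the answer is $4a-1$. What actually separates (3) from (4) is not the $2$-adic valuation of the linear factors but the \emph{signs} of the two quadratic factors mod $4$: in (3) one has $p'\equiv r'$ and $q'\equiv s'$, so $p'^{2}-q'^{2}$ and $r'^{2}-s'^{2}$ are congruent to the same unit $\pm 1$ mod $4$ and their product is $\equiv 1$; in (4) the parities of $(p',q')$ and $(r',s')$ are swapped, so the two factors are $\equiv +1$ and $\equiv -1$ mod $4$ in some order and the product is $\equiv -1$. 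This is exactly the computation the paper records as $\{(k'+m'+1)^{2}-(l'+n'+1)^{2}\}\{(k'-m')^{2}-(l'-n')^{2}\}\equiv -1\pmod 4$.

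There is a smaller but real slip in (6): if the two even entries are $k,m$, then $l,n$ are both odd, so $q=l+n$ and $s=l-n$ are \emph{even}, not odd; your ``$2^{4}\cdot 2^{2}=2^{6}$'' estimate and the conclusion that this sub-case ``cannot occur'' are both wrong (the sub-case occurs and in fact also lands in $2^{10}\mathbb{Z}$). The error is harmless to your final argument only because you then relabel via the symmetry of $D_{2}$ (Lemma~\ref{lem:3.1}) so that one even entry sits in each $D_{1}$-pair, which is precisely the paper's reduction to the pattern $(0,0,1,1)$, where both quadratic factors are differences of odd squares and hence $\equiv 0\pmod 8$, giving $2^{4}\cdot 2^{3}\cdot 2^{3}=2^{10}$. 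You should delete the false aside and rely solely on the symmetry reduction.
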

\begin{proof}
First, we prove (1) and (2). 
We have $(k + m)^{2} - (l + n)^{2} = (k - m)^{2} - (l - n)^{2} + 4 (k m - l n)$. 
Therefore, if $k + m \not\equiv l + n \pmod{2}$, then 
\begin{align*}
2^{- 4} \alpha 
&= \left\{ (k + m)^{2} - (l + n)^{2} \right\} \left\{ (k - m)^{2} - (l - n)^{2} \right\} \\ 
&\equiv 
\begin{cases}
1 \pmod{8}, & k m \equiv l n \pmod{2}, \\ 
- 3 \pmod{8}, & k m \not\equiv l n \pmod{2}. 
\end{cases}
\end{align*}
Second, we prove (3). 
If $k, l, m, n$ are even and $k + m \not\equiv l + n \pmod{4}$, 
then there exist $k', l', m', n' \in \mathbb{Z}$ satisfying $k = 2 k'$, $l = 2 l'$, $m = 2 m'$, $n = 2 n'$ and $k' + m' \not\equiv l' + n' \pmod{2}$. 
Therefore, 
$$
2^{- 8} \alpha = \left\{ (k' + m')^{2} - (l' + n')^{2} \right\} \left\{ (k' - m')^{2} - (l' - n')^{2} \right\} \equiv 1 \pmod{4}. 
$$
Third, we prove (4). 
If $k, l, m, n$ are odd and $k + m \not\equiv l + n \pmod{4}$, 
then there exist $k' , l', m', n' \in \mathbb{Z}$ satisfying $k = 2 k' + 1$, $l = 2 l' + 1$, $m = 2 m' + 1$, $n = 2 n' + 1$ and $k' + m' \not\equiv l' + n' \pmod{2}$. 
Therefore, 
\begin{align*}
2^{- 8} \alpha = \left\{ (k' + m' + 1)^{2} - (l' + n' + 1)^{2} \right\} \left\{ (k' - m')^{2} - (l' - n')^{2} \right\} \equiv - 1 \pmod{4}. 
\end{align*}
Fourth, we prove (5). 
If $k \equiv l \equiv m \equiv n \pmod{2}$ and $k + m \equiv l + n \pmod{4}$, 
then $(k + m)^{2} - (l + n)^{2} \equiv (k - m)^{2} - (l - n)^{2} \equiv 0 \pmod{16}$. 
Therefore, 
\begin{align*}
\alpha = 2^{4} \left\{ (k + m)^{2} - (l + n)^{2} \right\} \left\{ (k - m)^{2} - (l - n)^{2} \right\} \in 2^{12} \mathbb{Z}. 
\end{align*}
Finally, we prove (6). 
If $(k, l, m, n) \equiv (0, 0, 1, 1) \pmod{2}$, 
then $(k + m)^{2} - (l + n)^{2} \equiv (k - m)^{2} - (l - n)^{2} \equiv 0 \pmod{8}$. 
Therefore, 
\begin{align*}
\alpha = 2^{4} \left\{ (k + m)^{2} - (l + n)^{2} \right\} \left\{ (k - m)^{2} - (l - n)^{2} \right\} \in 2^{10} \mathbb{Z}. 
\end{align*}
Lemma~$\ref{lem:3.1}$ completes the proof of (6). 
\end{proof}

\begin{lem}\label{lem:3.4}
For any $k, l, m, n \in \mathbb{Z}$, 
let $\alpha := D_{2}(2k, 2l+1, 2m, 2n+1)$. 
The following hold: 
\begin{enumerate}
\item[$(1)$] If $k \equiv m \equiv 0 \pmod{2}$ and {\rm (I)} hold, 
then $\alpha \in \left\{ 2^{6} (8a - 1)(4b - 1) \mid a, b \in \mathbb{Z} \right\}$; 
\item[$(2)$] If $k \equiv m \equiv 0 \pmod{2}$ and {\rm (II)} hold, 
then $\alpha \in \left\{ 2^{6} (8a + 3)(4b + 1) \mid a, b \in \mathbb{Z} \right\}$; 
\item[$(3)$] If $k \equiv m \equiv 1 \pmod{2}$ and {\rm (I)} hold, 
then $\alpha \in \left\{ 2^{6} (8a + 3)(4b - 1) \mid a, b \in \mathbb{Z} \right\}$; 
\item[$(4)$] If $k \equiv m \equiv 1 \pmod{2}$ and {\rm (II)} hold, 
then $\alpha \in \left\{ 2^{6} (8a - 1)(4b +1) \mid a, b \in \mathbb{Z} \right\}$, 
\end{enumerate}
where 
\begin{enumerate}
\item[{\rm (I)}] $k + m \equiv 1 - l - n \equiv 0 \pmod{4}$ or $k - m \equiv 2 - l + n \equiv 0 \pmod{4}$; 
\item[{\rm (II)}] $k + m \equiv 1 - l - n \equiv 2 \pmod{4}$ or $k - m \equiv 2 - l + n \equiv 2 \pmod{4}$. 
\end{enumerate}
\end{lem}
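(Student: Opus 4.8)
The plan is to factor $\alpha$ into ``$D_1$''-type pieces and then track residues modulo small powers of $2$. By Corollary~$\ref{cor:2.1}$ with $n=1$, using $D_1(y_0,y_1)=y_0^2-y_1^2$ and $(x_0,x_1,x_2,x_3)=(2k,2l+1,2m,2n+1)$, we get $\alpha = D_1\bigl(2(k+m),2(l+n+1)\bigr)\,D_1\bigl(2(k-m),2(l-n)\bigr) = 16\,P\,Q$, where $P:=(k+m)^2-(l+n+1)^2$ and $Q:=(k-m)^2-(l-n)^2$. In all four cases $k\equiv m\pmod 2$, so $k+m$ and $k-m$ are even; and in both (I) and (II) the integer $l+n$ is odd, so $l-n$ is odd and $(l-n)^2\equiv1\pmod 8$. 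The whole problem thus reduces to pinning down $P$ and $Q$ modulo a small power of $2$.

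Before computing I would reduce (I) and (II) to their first alternatives. By Lemma~$\ref{lem:3.1}$, $D_2(x_0,x_1,x_2,x_3)$ is a symmetric polynomial in which only the monomial $8x_0x_1x_2x_3$ carries an odd power of some variable, so $D_2(x_0,x_1,-x_2,-x_3)=D_2(x_0,x_1,x_2,x_3)$; since $-2m=2(-m)$ and $-(2n+1)=2(-n-1)+1$, the value $\alpha$ is invariant under $(m,n)\mapsto(-m,-n-1)$. This substitution preserves the parities of $k$ and $m$ (so it stays inside the same one of the four cases), leaves the target set unchanged, and interchanges the two alternatives of (I) (and of (II)), since $k+m\leftrightarrow k-m$ and $1-l-n\leftrightarrow 2-l+n$. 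Hence we may assume $k+m\equiv1-l-n\equiv0\pmod4$ in case (I) and $k+m\equiv1-l-n\equiv2\pmod4$ in case (II).

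Now the bookkeeping. In case (I), $4\mid k+m$ gives $(k+m)^2\equiv0\pmod{16}$ and $l+n+1\equiv2\pmod4$ gives $(l+n+1)^2\equiv4\pmod{16}$, so $P\equiv-4\pmod{16}$; that is, $P=4P'$ with $P'\equiv-1\pmod4$. In case (II), symmetrically, $P=4P'$ with $P'\equiv1\pmod4$. For $Q$, write $k-m=(k+m)-2m$: the value of $k+m\bmod4$ (from (I)/(II)) together with the parity of $m$ shows $4\mid k-m$ in cases (1) and (4), while $k-m\equiv2\pmod4$ in cases (2) and (3); hence $(k-m)^2$ is $\equiv0$ or $\equiv4\pmod 8$ accordingly, and with $(l-n)^2\equiv1\pmod8$ this yields $Q\equiv-1\pmod8$ in cases (1), (4) and $Q\equiv3\pmod8$ in cases (2), (3). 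Finally $\alpha=16PQ=2^6P'Q$. Since $x\equiv-1\pmod 8$ forces $x=8a-1$ with $a=(x+1)/8\in\mathbb{Z}$, and likewise $x\equiv3\pmod8$ gives $x=8a+3$, $x\equiv-1\pmod4$ gives $x=4b-1$, $x\equiv1\pmod4$ gives $x=4b+1$, in each case $2^6P'Q$ is exactly of the asserted shape: $2^6(8a-1)(4b-1)$ in (1), $2^6(8a+3)(4b+1)$ in (2), $2^6(8a+3)(4b-1)$ in (3), $2^6(8a-1)(4b+1)$ in (4).

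The only genuine work is this congruence bookkeeping for $P$ and $Q$, kept straight over the four cases; note that here the power of $2$ dividing $\alpha$ is forced to be exactly $2^6$, so once the residues of $P'$ (mod $4$) and of $Q$ (mod $8$) are known, the identification with the four target sets is immediate. I expect no difficulty beyond Corollary~$\ref{cor:2.1}$, Lemma~$\ref{lem:3.1}$, and elementary congruences.
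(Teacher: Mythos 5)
Your proof is correct and follows essentially the same route as the paper: factor $\alpha = 2^4\left\{(k+m)^2-(l+n+1)^2\right\}\left\{(k-m)^2-(l-n)^2\right\}$ via $D_1$ and pin down the two factors modulo $16$ and $8$; the paper treats both alternatives of (I) and (II) by direct computation, whereas you halve the casework with the (valid) symmetry $(m,n)\mapsto(-m,-n-1)$ coming from $D_2(x_0,x_1,-x_2,-x_3)=D_2(x_0,x_1,x_2,x_3)$. One cosmetic slip: the claim that ``in both (I) and (II) the integer $l+n$ is odd'' is false for the second alternatives (there $l-n\equiv 2\pmod 4$, so $l+n$ is even); it only becomes true after your reduction to the first alternatives, which is also the only place you use it, so the argument stands.
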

\begin{proof}
Note that 
\begin{align*}
\alpha 
&= D_{1}( 2 k + 2 m, 2 l + 2 n +2 ) D_{1}( 2 k - 2 m, 2 l - 2 n )  \\ 
&= 2^{4} \left\{ (k + m)^{2} - (l + n + 1)^{2} \right\} \left\{ (k - m)^{2} - (l - n)^{2} \right\}. 
\end{align*}
If $k \equiv m \equiv 0 \pmod{2}$, then 
\begin{align*}
(k + m)^{2} - (l + n + 1)^{2} \equiv 
\begin{cases}
- 4 \pmod{16}, & \text{if} \: \: k + m \equiv 1 - l - n \equiv 0 \pmod{4}, \\ 
4 \pmod{16}, & \text{if} \: \: k + m \equiv 1 - l - n \equiv 2 \pmod{4}, \\ 
- 1 \pmod{8}, & \text{if} \: \: k - m \equiv 2 - l + n \equiv 0 \pmod{4}, \\ 
3 \pmod{8}, & \text{if} \: \: k - m \equiv 2 - l + n \equiv 2 \pmod{4}, 
\end{cases} \\ 
(k - m)^{2} - (l - n)^{2} \equiv 
\begin{cases}
- 1 \pmod{8}, & \text{if} \: \: k + m \equiv 1 - l - n \equiv 0 \pmod{4}, \\ 
3 \pmod{8}, & \text{if} \: \: k + m \equiv 1 - l - n \equiv 2 \pmod{4}, \\ 
- 4 \pmod{16}, & \text{if} \: \: k - m \equiv 2 - l + n \equiv 0 \pmod{4}, \\ 
4 \pmod{16}, & \text{if} \: \: k - m \equiv 2 - l + n \equiv 2 \pmod{4}. 
\end{cases}
\end{align*}
If $k \equiv m \equiv 1 \pmod{2}$, 
then 
\begin{align*}
(k + m)^{2} - (l + n + 1)^{2} \equiv 
\begin{cases}
- 4 \pmod{16}, & \text{if} \: \: k + m \equiv 1 - l - n \equiv 0 \pmod{4}, \\ 
4 \pmod{16}, & \text{if} \: \: k + m \equiv 1 - l - n \equiv 2 \pmod{4}, \\ 
3 \pmod{8}, & \text{if} \: \: k - m \equiv 2 - l + n \equiv 0 \pmod{4}, \\ 
- 1 \pmod{8}, & \text{if} \: \: k - m \equiv 2 - l + n \equiv 2 \pmod{4}, 
\end{cases} \\ 
(k - m)^{2} - (l - n)^{2} \equiv 
\begin{cases}
3 \pmod{8}, & \text{if} \: \: k + m \equiv 1 - l - n \equiv 0 \pmod{4}, \\ 
- 1 \pmod{8}, & \text{if} \: \: k + m \equiv 1 - l - n \equiv 2 \pmod{4}, \\ 
- 4 \pmod{16}, & \text{if} \: \: k - m \equiv 2 - l + n \equiv 0 \pmod{4}, \\ 
4 \pmod{16}, & \text{if} \: \: k - m \equiv 2 - l + n \equiv 2 \pmod{4}. 
\end{cases}
\end{align*}
From the above, the lemma is proved. 
\end{proof}

Note that for any integers $a, b$ and $c$, where $b$ is odd, 
it holds that 
$$
a b \equiv c \pmod{8} \iff a \equiv b c \pmod{8} 
$$
since $b^{2} \equiv 1 \pmod{8}$. 
From this, we have the following. 

\begin{rem}\label{rem:3.5}
For any $k, l, m, n \in \mathbb{Z}$, the following hold: 
\begin{enumerate}
\item[$(1)$] $(2 k + 2 l + 1) (2 m + 2 n + 1) \equiv 1 \pmod{8} \iff k - m \equiv - l + n \pmod{4}$; 
\item[$(2)$] $(2 k + 2 l + 1) (2 m + 2 n + 1) \equiv - 1 \pmod{8} \iff k + m \equiv - l - n - 1 \pmod{4}$;  
\item[$(3)$] $(2 k + 2 l + 1) (2 m + 2 n + 1) \equiv 3 \pmod{8} \iff k + m \equiv 1 - l - n \pmod{4}$; 
\item[$(4)$] $(2 k + 2 l + 1) (2 m + 2 n + 1) \equiv - 3 \pmod{8} \iff k - m \equiv 2 - l + n \pmod{4}$.  
\end{enumerate}
\end{rem}

\begin{lem}\label{lem:3.6}
For any $k, l, m, n \in \mathbb{Z}$, 
the following hold: 
\begin{enumerate}
\item[$(1)$] $D_{2}(2 k, 2 l, 2 m, 2 n) \in \begin{cases} 2^{4} \mathbb{Z}_{\rm odd}, & k + m \not\equiv l + n \pmod{2}, \\ 2^{8} \mathbb{Z}_{\rm odd} \cup 2^{10} \mathbb{Z}, & k + m \equiv l + n \pmod{2}; \end{cases}$ 
\item[$(2)$] $D_{2}(2 k +1, 2 l + 1, 2 m + 1, 2 n + 1) \in \begin{cases} 2^{4} \mathbb{Z}_{\rm odd}, & k + m \not\equiv l + n \pmod{2}, \\ 2^{9} \mathbb{Z}, & k + m \equiv l + n \pmod{2}; \end{cases}$ 
\item[$(3)$] $D_{2}(2 k, 2 l + 1, 2 m, 2 n + 1)$ \\ 
\quad $\in \begin{cases} 2^{7} \mathbb{Z}, & k \not\equiv m \pmod{2}, \\ 2^{8} \mathbb{Z}, & k \equiv m \pmod{2} \: \: \text{and} \: \: (2 k + 2 l + 1) (2 m + 2 n + 1) \equiv \pm 1 \pmod{8}, \\ 
2^{6} \mathbb{Z}_{\rm odd}, & k \equiv m \pmod{2} \: \: \text{and} \: \: (2 k + 2 l + 1) (2 m + 2 n + 1) \equiv \pm 3 \pmod{8}. \end{cases}$ 
\end{enumerate}
\end{lem}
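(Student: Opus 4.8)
The plan is to reduce all three statements to the identity
$D_{2}(x_{0},x_{1},x_{2},x_{3})=\{(x_{0}+x_{2})^{2}-(x_{1}+x_{3})^{2}\}\{(x_{0}-x_{2})^{2}-(x_{1}-x_{3})^{2}\}$
(the $n=1$ instance of Corollary~\ref{cor:2.1} together with $D_{1}(y_{0},y_{1})=y_{0}^{2}-y_{1}^{2}$), and then to carry out elementary $2$-adic bookkeeping on the two bracketed factors, invoking Lemmas~\ref{lem:3.3} and~\ref{lem:3.4} and Remark~\ref{rem:3.5} precisely where they shorten the work.

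For (1): when $k+m\not\equiv l+n\pmod{2}$, the claim $D_{2}(2k,2l,2m,2n)\in2^{4}\mathbb{Z}_{\rm odd}$ is already contained in Lemma~\ref{lem:3.3}(1)--(2). When $k+m\equiv l+n\pmod{2}$, I would first observe that this forces the number of even entries among $k,l,m,n$ to be $0$, $2$, or $4$; the resulting configurations --- all entries even with $k+m\not\equiv l+n\pmod{4}$, all entries odd with $k+m\not\equiv l+n\pmod{4}$, all entries of a single parity with $k+m\equiv l+n\pmod{4}$, and exactly two entries even --- are covered by Lemma~\ref{lem:3.3}(3),(4),(5),(6) respectively, yielding a value in $2^{8}\mathbb{Z}_{\rm odd}$ in the first two cases and in $2^{10}\mathbb{Z}$ in the last two. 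For (2): expanding gives $D_{2}(2k+1,2l+1,2m+1,2n+1)=2^{4}\{(k+m+1)^{2}-(l+n+1)^{2}\}\{(k-m)^{2}-(l-n)^{2}\}$; if $k+m\not\equiv l+n\pmod{2}$ both bracketed factors are odd, while if $k+m\equiv l+n\pmod{2}$ then, according as $k+m$ is even or odd, one bracket is a difference of two odd squares (hence $\equiv0\pmod{8}$) and the other a difference of two even squares (hence in $4\mathbb{Z}$), so the product lies in $2^{5}\mathbb{Z}$ and the value in $2^{9}\mathbb{Z}$.

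For (3): expanding gives $D_{2}(2k,2l+1,2m,2n+1)=2^{4}\{(k+m)^{2}-(l+n+1)^{2}\}\{(k-m)^{2}-(l-n)^{2}\}$. If $k\not\equiv m\pmod{2}$ then $k\pm m$ are odd, and splitting on the parity of $l+n$ shows one bracket is $\equiv0\pmod{8}$ while the other is odd, so the value lies in $2^{7}\mathbb{Z}$. If $k\equiv m\pmod{2}$, then $(2k+2l+1)(2m+2n+1)$ is odd, hence $\equiv\pm1$ or $\pm3\pmod{8}$. In the $\pm1$ cases, Remark~\ref{rem:3.5}(1)--(2) rewrites the hypothesis as $k-m\equiv-(l-n)\pmod{4}$ or $k+m\equiv-(l+n+1)\pmod{4}$; since $k\pm m$ are even, this makes the corresponding pair of bases even with sum $\equiv0\pmod{4}$, so the relevant difference of squares lies in $16\mathbb{Z}$ (a one-line check after writing the two even bases as $2s,2t$, which forces $s\equiv t\pmod{2}$), while the other bracket is an odd square minus an even square, hence odd --- so the value lies in $2^{8}\mathbb{Z}$. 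In the $\pm3$ cases, Remark~\ref{rem:3.5}(3)--(4) rewrites the hypothesis as $k+m\equiv1-l-n\pmod{4}$ or $k-m\equiv2-l+n\pmod{4}$; since $k+m$ (resp.\ $k-m$) is even, the common residue is $0$ or $2$ modulo $4$, which is exactly condition (I) or (II) of Lemma~\ref{lem:3.4}, and Lemma~\ref{lem:3.4} then delivers a value in $2^{6}\mathbb{Z}_{\rm odd}$.

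The step I would write out most carefully is the $\pm1$ sub-case of (3): the improvement from the bound $2^{7}$ to the bound $2^{8}$ rests entirely on noticing that a difference of two \emph{even} squares whose bases sum to $0$ modulo $4$ is divisible by $16$ rather than merely by $8$, and Remark~\ref{rem:3.5} is exactly what makes that divisibility available. The only other point demanding attention is confirming that the case splits are exhaustive --- in particular that $k+m\equiv l+n\pmod{2}$ does force the even-entry count into $\{0,2,4\}$, and that the residues $\pm1,\pm3\pmod{8}$ account for every odd integer.
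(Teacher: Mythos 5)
Your proposal is correct and follows essentially the same route as the paper: reduce to the factorization $D_{2}=D_{1}D_{1}$, deduce (1) from Lemma~\ref{lem:3.3}, (2) from the explicit expansion, and (3) by splitting on the parity of $k-m$ and on the residue of $(2k+2l+1)(2m+2n+1)$ modulo $8$, using Remark~\ref{rem:3.5} and Lemma~\ref{lem:3.4} exactly as the paper does. The only difference is that you spell out a few steps the paper leaves implicit (the exhaustive even-entry count in (1) and the $2s,2t$ computation showing divisibility by $16$ in the $\pm1$ subcase of (3)), and these are all correct.
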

\begin{proof}
From Lemma~$\ref{lem:3.3}$, 
it follows that (1) holds. 
Also, we obtain (2) from 
\begin{align*}
D_{2}(2 k +1, 2 l + 1, 2 m + 1, 2 n + 1) 
&= D_{1}(2 k + 2 m + 2, 2 l + 2 n + 2) D_{1}(2 k - 2 m, 2 l - 2 n) \\ 
&= 2^{4} \left\{ (k + m + 1)^{2} - (l + n + 1)^{2} \right\} \left\{ (k - m)^{2} - (l - n)^{2} \right\}. 
\end{align*} 
Below, we prove (3). 
Let $\alpha := D_{2}(2 k, 2 l + 1, 2 m, 2 n + 1)$. 
Note that 
\begin{align*}
\alpha 
&= D_{1}(2 k + 2 m, 2 l + 2 n + 2) D_{1}(2 k - 2 m, 2 l - 2 n) \\ 
&= 2^{4} \left\{ (k + m)^{2} - (l + n + 1)^{2} \right\} \left\{ (k - m)^{2} - (l - n)^{2} \right\}. 
\end{align*}
If $k \not\equiv m \pmod{2}$, then $\alpha \in 2^{7} \mathbb{Z}$. 
If $k \equiv m \pmod{2}$ and $(2 k + 2 l + 1) (2 m + 2 n + 1) \equiv 1 \pmod{8}$, 
then $(k - m)^{2} - (l - n)^{2} \in 2^{4} \mathbb{Z}$ from Remark~$\ref{rem:3.5}$~$(1)$. 
Thus, $\alpha \in 2^{8} \mathbb{Z}$. 
If $k \equiv m \pmod{2}$ and $(2 k + 2 l + 1) (2 m + 2 n + 1) \equiv - 1 \pmod{8}$, 
then $(k + m)^{2} - (l + n + 1)^{2} \in 2^{4} \mathbb{Z}$ from Remark~$\ref{rem:3.5}$~$(2)$. 
Thus, $\alpha \in 2^{8} \mathbb{Z}$. 
The cases of $k \equiv m \pmod{2}$ and $(2 k + 2 l + 1) (2 m + 2 n + 1) \equiv \pm 3 \pmod{8}$ are proved from Lemma~$\ref{lem:3.4}$ and Remark~$\ref{rem:3.5}$~$(3)$ and $(4)$. 
\end{proof}

\section{Impossible values}\label{Section3}

In this section, 
we consider impossible values. 

\begin{lem}\label{lem:4.1}
We have $S \left( {\rm C}_{2}^{4} \right) \cap \mathbb{Z}_{\rm odd} \subset \left\{ 16 m + 1 \mid m \in \mathbb{Z} \right\}$. 
\end{lem}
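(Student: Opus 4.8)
Here is how I would prove Lemma~$\ref{lem:4.1}$.

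The plan is to exploit the product formula $D_{4}(a_{0},\ldots,a_{15}) = D_{2}(b_{0},b_{1},b_{2},b_{3})\,D_{2}(c_{0},c_{1},c_{2},c_{3})\,D_{2}(d_{0},d_{1},d_{2},d_{3})\,D_{2}(e_{0},e_{1},e_{2},e_{3})$ from Section~2. If $D_{4}$ is odd, then each of the four $D_{2}$-factors is odd. By Lemma~$\ref{lem:3.1}$ one has $D_{2}(x_{0},x_{1},x_{2},x_{3}) \equiv x_{0}+x_{1}+x_{2}+x_{3} \pmod{2}$, so a factor is odd exactly when an odd number of its entries are odd, i.e.\ exactly one or exactly three of them. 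By Remark~$\ref{rem:2.2}$~(1) the tuples $(b_{i})$, $(c_{i})$, $(d_{i})$, $(e_{i})$ agree mod $2$ coordinatewise, so their odd entries occupy the same position(s), and we are in one of two cases: (A) all four tuples have exactly one odd entry, located at a common index $i_{0}$; or (B) all four have exactly three odd entries, with the unique even entry at a common index $i_{0}$.

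In case~(A) I use the symmetry of $D_{2}$ (Lemma~$\ref{lem:3.1}$) to move the odd coordinate into last place and then apply Lemma~$\ref{lem:3.2}$~(1), getting, for each $f \in \{b,c,d,e\}$, $D_{2}(f_{0},f_{1},f_{2},f_{3}) \equiv 8\sum_{j \ne i_{0}} (f_{j}/2) + 1 \pmod{16}$, each $f_{j}$ with $j \ne i_{0}$ being even. Since $(8s+1)(8t+1) \equiv 8(s+t)+1 \pmod{16}$, a product of four such factors is $\equiv 8(s_{1}+s_{2}+s_{3}+s_{4})+1 \pmod{16}$, so $D_{4} \equiv 8T + 1 \pmod{16}$ with $T = \sum_{j \ne i_{0}} \tfrac{1}{2}(b_{j}+c_{j}+d_{j}+e_{j})$. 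By Remark~$\ref{rem:2.2}$~(2) each $b_{j}+c_{j}+d_{j}+e_{j}$ is divisible by $4$, so $T$ is even and $D_{4} \equiv 1 \pmod{16}$. In case~(B), Lemma~$\ref{lem:3.1}$ and Lemma~$\ref{lem:3.2}$~(2) give $D_{2}(f_{0},f_{1},f_{2},f_{3}) \equiv 8(f_{i_{0}}/2) - 3 \pmod{16}$ for each $f \in \{b,c,d,e\}$, with $f_{i_{0}}$ even; a direct computation yields $(8a-3)(8b-3)(8c-3)(8d-3) \equiv 8(a+b+c+d)+1 \pmod{16}$, hence $D_{4} \equiv 8 \cdot \tfrac{1}{2}(b_{i_{0}}+c_{i_{0}}+d_{i_{0}}+e_{i_{0}}) + 1 \pmod{16}$, and Remark~$\ref{rem:2.2}$~(2) again forces $\tfrac{1}{2}(b_{i_{0}}+c_{i_{0}}+d_{i_{0}}+e_{i_{0}})$ to be even, so $D_{4} \equiv 1 \pmod{16}$. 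In both cases $D_{4} \equiv 1 \pmod{16}$, which is the claim.

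The delicate point is purely the arithmetic modulo $16$: reducing $D_{2}$ to the residues $8(\cdots)+1$ and $8(\cdots)-3$ through Lemma~$\ref{lem:3.2}$, expanding a product of four such residues without slips, and making sure that the index $i_{0}$ singled out by the common parity pattern is exactly the one Remark~$\ref{rem:2.2}$~(2) must be applied to — namely to $b_{i_{0}}+c_{i_{0}}+d_{i_{0}}+e_{i_{0}}$ in case~(B) and to $\sum_{j \ne i_{0}}(b_{j}+c_{j}+d_{j}+e_{j})$ in case~(A). I expect this bookkeeping to be the only real obstacle; once it is set up, the divisibility-by-$4$ input of Remark~$\ref{rem:2.2}$~(2) closes both cases at once. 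As a sanity check, note that this argument also shows $S({\rm C}_{2}^{4}) \cap \mathbb{Z}_{\rm odd}$ cannot meet any residue class $8m+1$ with $m$ even that is not already $\equiv 1 \pmod{16}$, consistently with the asserted answer $\{16m+1\}$.
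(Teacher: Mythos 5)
Your proof is correct and follows essentially the same route as the paper's: the same factorization of $D_{4}$ into the four $D_{2}$ factors, the same two-case analysis according to whether one or three of the entries are odd (reduced to standard position via the symmetry of $D_{2}$ from Lemma~$\ref{lem:3.1}$), the same application of Lemma~$\ref{lem:3.2}$, and the same use of Remark~$\ref{rem:2.2}$~(2) to force the product into the class $1 \pmod{16}$. The only cosmetic difference is that you obtain the ``odd number of odd entries'' condition directly from the mod-$2$ reduction of $D_{2}$ rather than from Lemma~$\ref{lem:2.4}$.
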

\begin{proof}
Let $D_{4}(a_{0}, \ldots, a_{15}) = D_{2}(b_{0}, b_{1}, b_{2}, b_{3}) D_{2}(c_{0}, c_{1}, c_{2}, c_{3}) D_{2}(d_{0}, d_{1}, d_{2}, d_{3}) D_{2}(e_{0}, e_{1}, e_{2}, e_{3})$ be an odd number. 
Then, $b_{0} + b_{2} \not\equiv b_{1} + b_{3} \pmod{2}$ holds from Lemma~$\ref{lem:2.4}$. 
We divide the proof into the following cases: 
\begin{enumerate}
\item[(i)] Exactly three of $b_{0}, b_{1}, b_{2}, b_{3}$ are even; 
\item[(ii)] Exactly one of $b_{0}, b_{1}, b_{2}, b_{3}$ is even. 
\end{enumerate}
First, we consider the case of (i). 
If $( b_{0}, b_{1}, b_{2}, b_{3} ) \equiv (0, 0, 0, 1) \pmod{2}$, 
then there exist $k_{i}, l_{i}, m_{i} \in \mathbb{Z}$ satisfying $(b_{0}, b_{1}, b_{2}) = (2 k_{0}, 2 l_{0}, 2 m_{0})$, 
$(c_{0}, c_{1}, c_{2}) = (2 k_{1}, 2 l_{1}, 2 m_{1})$, 
$(d_{0}, d_{1}, d_{2}) = (2 k_{2}, 2 l_{2}, 2 m_{2})$, 
$(e_{0}, e_{1}, e_{2}) = (2 k_{3}, 2 l_{3}, 2 m_{3})$ and $\sum_{i = 0}^{3} k_{i} \equiv \sum_{i = 0}^{3} l_{i} \equiv \sum_{i = 0}^{3} m_{i} \equiv 0 \pmod{2}$ from Remark~$\ref{rem:2.2}$. 
Therefore, from Lemma~$\ref{lem:3.2}$~$(1)$, we have 
\begin{align*}
D_{4}(a_{0}, a_{1}, \ldots, a_{15}) &\equiv 
\prod_{i = 0}^{3} \left\{ 8 (k_{i} + l_{i} + m_{i}) + 1 \right\} \equiv 8 \sum_{i = 0}^{3} (k_{i} + l_{i} + m_{i}) + 1 \equiv 1 \pmod{16}. 
\end{align*}
Lemma~$\ref{lem:3.1}$ completes the proof for the case (i). 
Next, we consider the case of (ii). 
If $( b_{0}, b_{1}, b_{2}, b_{3} ) \equiv (0, 1, 1, 1) \pmod{2}$, 
then there exist $k'_{i} \in \mathbb{Z}$ satisfying $b_{0} = 2 k'_{0}$, 
$c_{0} = 2 k'_{1}$, $d_{0} = 2 k'_{2}$, $e_{0} = 2 k' _{3}$ and $\sum_{i = 0}^{3} k'_{i} \equiv 0 \pmod{2}$ from Remark~$\ref{rem:2.2}$. 
Therefore, from Lemma~$\ref{lem:3.2}$~$(2)$, we have 
\begin{align*}
D_{4}(a_{0}, a_{1}, \ldots, a_{15}) \equiv \prod_{i = 0}^{3} (8 k'_{i} - 3) \equiv 8 \sum_{i = 0}^{3} k'_{i} + 1 \equiv 1 \pmod{16}. 
\end{align*}
Lemma~$\ref{lem:3.1}$ completes the proof for the case (ii). 
\end{proof}

\begin{lem}\label{lem:4.2}
We have $S \left( {\rm C}_{2}^{4} \right) \cap 2 \mathbb{Z} \subset 2^{16} \mathbb{Z}_{\rm odd} \cup 2^{24} \mathbb{Z}_{\rm odd} \cup 2^{26} \mathbb{Z}$. 
\end{lem}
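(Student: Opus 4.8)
The plan is to exploit the factorization $D_4 = D_2(b)\,D_2(c)\,D_2(d)\,D_2(e)$ together with the parity and mod-$4$ constraints of Remark~\ref{rem:2.2}, and then to invoke the structural lemmas of Section~3 (Lemmas~\ref{lem:3.2}--\ref{lem:3.6}) to pin down the $2$-adic valuation and the residue class of each factor. Assume $D_4(a_0,\ldots,a_{15}) \in 2\mathbb{Z}$; by Lemma~\ref{lem:2.4} this forces $b_0 + b_2 \equiv b_1 + b_3 \pmod 2$, and by Lemma~\ref{lem:2.3}~(2) all four of $D_2(b), D_2(c), D_2(d), D_2(e)$ are even. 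By Remark~\ref{rem:2.2}~(1), the quadruples $(b_i), (c_i), (d_i), (e_i)$ have the same parity pattern, so there are essentially three parity cases for $(b_0,b_1,b_2,b_3) \bmod 2$ consistent with $b_0+b_2\equiv b_1+b_3$: (a) all four even; (b) exactly two even, and these two are $\{b_0,b_2\}$ or $\{b_1,b_3\}$ (the "split" patterns); (c) exactly two even in a "crossed" pattern like $(0,0,1,1)$ — but one checks this violates $b_0+b_2\equiv b_1+b_3$, so it cannot occur; and all four odd.

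First I would handle the all-even case. Writing $b_i = 2\beta_i$ etc., each factor $D_2(2\beta_0,2\beta_1,2\beta_2,2\beta_3)$ contributes a factor of $2^4$ at minimum (Lemma~\ref{lem:3.1} or the explicit $D_1\cdot D_1$ expansion), so $D_4 \in 2^{16}\mathbb{Z}$; then Lemma~\ref{lem:3.6}~(1) refines each factor to lie in $2^4\mathbb{Z}_{\rm odd}$ or $2^8\mathbb{Z}_{\rm odd}\cup 2^{10}\mathbb{Z}$, according to whether $\beta_0+\beta_2 \not\equiv \beta_1+\beta_3$ or $\equiv$. Counting how many of the four factors fall in the "$2^4\mathbb{Z}_{\rm odd}$" alternative (this count is controlled by Remark~\ref{rem:2.2}~(2), which links the mod-$4$ behavior of $b_i,c_i,d_i,e_i$) shows the total valuation is $16$, $24$, or $\geq 26$, landing in $2^{16}\mathbb{Z}_{\rm odd}\cup 2^{24}\mathbb{Z}_{\rm odd}\cup 2^{26}\mathbb{Z}$. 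The all-odd case is similar but uses Lemma~\ref{lem:3.6}~(2): each factor is in $2^4\mathbb{Z}_{\rm odd}$ or $2^9\mathbb{Z}$, and again the mod-$4$ linkage of Remark~\ref{rem:2.2}~(2) constrains how many factors can be "large", giving valuation $16$ or $\geq 26$ (an even number of the $2^4\mathbb{Z}_{\rm odd}$ factors is impossible to have exactly two of when the product must be even, etc.).

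The remaining, and I expect hardest, case is the "split" parity pattern, say $(b_0,b_1,b_2,b_3)\equiv(0,1,0,1)\pmod 2$ (the pattern $(1,0,1,0)$ is symmetric by Lemma~\ref{lem:3.1}). Then each factor has the form $D_2(2k,2l+1,2m,2n+1)$, and Lemma~\ref{lem:3.6}~(3) gives the trichotomy $2^6\mathbb{Z}_{\rm odd}$, $2^7\mathbb{Z}$, or $2^8\mathbb{Z}$ per factor depending on the parity of $k\bmod 2$ versus $m\bmod 2$ and the residue of $(2k+2l+1)(2m+2n+1)\bmod 8$. The obstacle here is bookkeeping: one must track, across the four factors simultaneously, the joint constraints coming from Remark~\ref{rem:2.2}~(1),(2) on the "inner" parities and mod-$4$ data, and show the sum of the four valuations is always $\geq 16$ and, whenever it equals $16$ or lies strictly between $16$ and $26$, it is forced to be exactly $16$ or $24$ — never, e.g., $17,18,\ldots,23,25$. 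I would organize this as a finite case check over the possible parity profiles of the inner variables, using Remark~\ref{rem:3.5} to convert the mod-$8$ product conditions into clean mod-$4$ conditions on $k\pm m$ and $l\pm n$, and then summing valuations. Once all parity patterns are exhausted, the union of the attained sets is exactly $2^{16}\mathbb{Z}_{\rm odd}\cup 2^{24}\mathbb{Z}_{\rm odd}\cup 2^{26}\mathbb{Z}$, which is contained in $2^{16}\mathbb{Z}_{\rm odd}\cup 2^{24}\mathbb{Z}_{\rm odd}\cup 2^{26}\mathbb{Z}$ as claimed.
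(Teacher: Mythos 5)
Your overall strategy is the same as the paper's: factor $D_{4}$ into the four $D_{2}$-values at $(b_i),(c_i),(d_i),(e_i)$, split on the common parity pattern of $(b_0,b_1,b_2,b_3)$, and feed each pattern into Lemma~\ref{lem:3.6}. Two points need fixing, one of which is an outright error.

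First, your claim that the ``crossed'' pattern $(b_0,b_1,b_2,b_3)\equiv(0,0,1,1)\pmod 2$ violates $b_0+b_2\equiv b_1+b_3\pmod 2$ is false: there $b_0+b_2\equiv 0+1\equiv 1$ and $b_1+b_3\equiv 0+1\equiv 1$. In fact Lemma~\ref{lem:2.4} only rules out the patterns with an odd number of even entries; all six patterns with exactly two even entries survive. So the crossed patterns genuinely occur and must be treated. They are not a new difficulty --- since $D_2$ is a symmetric polynomial (Lemma~\ref{lem:3.1}), any two-even pattern can be permuted to $(0,1,0,1)$, which is exactly the device you already invoke for $(1,0,1,0)$ --- but as written your case list excludes a live case for a wrong reason.

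Second, in the two-even case you stop at ``I would organize this as a finite case check'' and worry about ruling out valuations $17,\dots,23,25$; this is the heart of the lemma and is left unexecuted. The bookkeeping you fear collapses to a single parity count, parallel to the one you use in the all-even case: writing $(b_0,b_2)=(2k'_0,2m'_0)$, \dots, $(e_0,e_2)=(2k'_3,2m'_3)$, Remark~\ref{rem:2.2}~(2) gives $\sum_i k'_i\equiv\sum_i m'_i\equiv 0\pmod 2$, hence the number of indices with $k'_i\not\equiv m'_i\pmod 2$ (i.e.\ the number of factors in the $2^{7}\mathbb{Z}$ branch of Lemma~\ref{lem:3.6}~(3)) is $0$, $2$ or $4$. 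If it is $2$ or $4$ the product already lies in $2^{26}\mathbb{Z}$; if it is $0$, every factor lies in $2^{6}\mathbb{Z}_{\rm odd}\cup 2^{8}\mathbb{Z}$, giving $2^{24}\mathbb{Z}_{\rm odd}$ when all four are in $2^{6}\mathbb{Z}_{\rm odd}$ and $2^{26}\mathbb{Z}$ otherwise. (Note also that valuation $16$ cannot occur here, since each factor contributes at least $2^{6}$; your stated target ``exactly $16$ or $24$'' for this case is off, though harmless for the containment.) The mod-$8$ refinements via Remark~\ref{rem:3.5} are not needed for this lemma; they only enter later, in Lemma~\ref{lem:4.6}. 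Your all-even and all-odd cases are correct in outline, with the same evenness-of-the-count mechanism supplying the conclusion.
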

\begin{proof}
Let $D_{4}(a_{0}, \ldots, a_{15}) = D_{2}(b_{0}, b_{1}, b_{2}, b_{3}) D_{2}(c_{0}, c_{1}, c_{2}, c_{3}) D_{2}(d_{0}, d_{1}, d_{2}, d_{3}) D_{2}(e_{0}, e_{1}, e_{2}, e_{3})$ be an even number. 
Then, $b_{0} + b_{2} \equiv b_{1} + b_{3} \pmod{2}$ holds from Lemma~$\ref{lem:2.4}$. 
We prove the following: 
\begin{enumerate}
\item[(i)] If $b_{0}, b_{1}, b_{2}, b_{3}$ are even, then $D_{4}(a_{0}, \ldots, a_{15}) \in 2^{16} \mathbb{Z}_{\rm odd} \cup 2^{24} \mathbb{Z}_{\rm odd} \cup 2^{26} \mathbb{Z}$; 
\item[(ii)] If $b_{0}, b_{1}, b_{2}, b_{3}$ are odd, then $D_{4}(a_{0}, \ldots, a_{15}) \in 2^{16} \mathbb{Z}_{\rm odd} \cup 2^{26} \mathbb{Z}$; 
\item[(iii)] If exactly two of $b_{0}, b_{1}, b_{2}, b_{3}$ are even, then $D_{4}(a_{0}, \ldots, a_{15}) \in 2^{24} \mathbb{Z}_{\rm odd} \cup 2^{26} \mathbb{Z}$. 
\end{enumerate}
First, we prove (i). 
If $b_{0}, b_{1}, b_{2}, b_{3}$ are even, 
then there exist $k_{i}, l_{i}, m_{i}, n_{i} \in \mathbb{Z}$ satisfying 
\begin{align*}
(b_{0}, b_{1}, b_{2}, b_{3}) &= (2 k_{0}, 2 l_{0}, 2 m_{0}, 2 n_{0}), &
(c_{0}, c_{1}, c_{2}, c_{3}) = (2 k_{1}, 2 l_{1}, 2 m_{1}, 2 n_{1}), \\ 
(d_{0}, d_{1}, d_{2}, d_{3}) &= (2 k_{2}, 2 l_{2}, 2 m_{2}, 2 n_{2}), &
(e_{0}, e_{1}, e_{2}, e_{3}) = (2 k_{3}, 2 l_{3}, 2 m_{3}, 2 n_{3})
\end{align*}
and $\sum_{i = 0}^{3} k_{i} \equiv \sum_{i = 0}^{3} l_{i} \equiv \sum_{i = 0}^{3} m_{i} \equiv \sum_{i = 0}^{3} n_{i} \equiv 0 \pmod{2}$ from Remark~$\ref{rem:2.2}$. 
Let $N$ denote the cardinal number of the set $\left\{ i \mid 0 \leq i \leq 3, \: \: k_{i} + m_{i} \equiv l_{i} + n_{i} \pmod{2} \right\}$. 
Then, $N \in \{ 0, 2, 4 \}$ holds from $\sum_{i = 0}^{3} (k_{i} + m_{i} - l_{i} - n_{i}) \equiv 0 \pmod{2}$. 
Therefore, 
from Lemma~$\ref{lem:3.6}$~$(1)$, 
we obtain (i). 
Also, in the same way, 
we can prove (ii) by using Lemma~$\ref{lem:3.6}$~(2). 
Finally, we prove (iii). 
If $( b_{0}, b_{1}, b_{2}, b_{3} ) \equiv (0, 1, 0, 1) \pmod{2}$, 
then there exist $k'_{i}, m'_{i} \in \mathbb{Z}$ satisfying $(b_{0}, b_{2}) = (2 k'_{0}, 2 m'_{0})$, 
$(c_{0}, c_{2}) = (2 k'_{1}, 2 m'_{1})$, 
$(d_{0}, d_{2}) = (2 k'_{2}, 2 m'_{2})$, 
$(e_{0}, e_{2}) = (2 k'_{3}, 2 m'_{3})$ and $\sum_{i = 0}^{3} k'_{i} \equiv \sum_{i = 0}^{3} m'_{i} \equiv 0 \pmod{2}$ from Remark~$\ref{rem:2.2}$. 
Let $N'$ denote the cardinal number of the set $\left\{ i \mid 0 \leq i \leq 3, \: \: k'_{i} \equiv m'_{i} \pmod{2} \right\}$. 
Then, $N' \in \{ 0, 2, 4 \}$ holds from $\sum_{i = 0}^{3} (k'_{i} - m'_{i}) \equiv 0 \pmod{2}$. 
Therefore, 
from Lemma~$\ref{lem:3.6}$~$(3)$, 
we have $D_{4}(a_{0}, \ldots, a_{15}) \in 2^{24} \mathbb{Z}_{\rm odd} \cup 2^{26} \mathbb{Z}$. 
Lemma~$\ref{lem:3.1}$ completes the proof of (iii). 
\end{proof}

\begin{lem}\label{lem:}
We have $S \left( {\rm C}_{2}^{4} \right) \cap 2^{16} \mathbb{Z}_{\rm odd} \subset \left\{ 2^{16} (4 m + 1) \mid m \in \mathbb{Z} \right\}$. 
\end{lem}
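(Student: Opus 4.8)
The plan is to start, exactly as in the proofs of Lemmas~$\ref{lem:4.1}$ and~$\ref{lem:4.2}$, from the factorization
$$
D_{4}(a_{0}, \ldots, a_{15}) = D_{2}(b_{0}, b_{1}, b_{2}, b_{3}) D_{2}(c_{0}, c_{1}, c_{2}, c_{3}) D_{2}(d_{0}, d_{1}, d_{2}, d_{3}) D_{2}(e_{0}, e_{1}, e_{2}, e_{3}),
$$
and to assume this product lies in $2^{16} \mathbb{Z}_{\rm odd}$. Being even, it forces $b_{0} + b_{2} \equiv b_{1} + b_{3} \pmod{2}$ by Lemma~$\ref{lem:2.4}$, and by Remark~$\ref{rem:2.2}$~(1) the four tuples $b, c, d, e$ share the same parity pattern. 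Hence $b_{0}, b_{1}, b_{2}, b_{3}$ are all even, all odd, or exactly two of them even. The computation in Lemma~$\ref{lem:4.2}$~(iii) shows that in the ``exactly two even'' case one has $D_{4} \in 2^{24} \mathbb{Z}_{\rm odd} \cup 2^{26} \mathbb{Z}$, which is incompatible with $D_{4} \in 2^{16} \mathbb{Z}_{\rm odd}$; so only the all-even and all-odd cases need to be treated.

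In the all-even case I would reuse the substitution from the proof of Lemma~$\ref{lem:4.2}$~(i): write $(b_{0}, b_{1}, b_{2}, b_{3}) = (2 k_{0}, 2 l_{0}, 2 m_{0}, 2 n_{0})$ and similarly for $c, d, e$, with $\sum_{i} k_{i} \equiv \sum_{i} l_{i} \equiv \sum_{i} m_{i} \equiv \sum_{i} n_{i} \equiv 0 \pmod{2}$ coming from Remark~$\ref{rem:2.2}$~(2). Let $N$ be the number of indices $i$ with $k_{i} + m_{i} \equiv l_{i} + n_{i} \pmod{2}$; these congruences force $N \in \{ 0, 2, 4 \}$. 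By Lemma~$\ref{lem:3.6}$~(1), each such ``balanced'' index contributes a factor $2^{8}$ to $D_{4}$ while each of the remaining indices contributes exactly $2^{4}$, so $2^{16 + 4 N} \mid D_{4}$; since $D_{4} \in 2^{16} \mathbb{Z}_{\rm odd}$, this gives $N = 0$. Then Lemma~$\ref{lem:3.3}$~(1),(2) applies to all four factors and shows each of them equals $2^{4}$ times an integer $\equiv 1 \pmod{4}$ (both $8a + 1$ and $8a - 3$ are $\equiv 1 \pmod{4}$), so $2^{-16} D_{4}$ is a product of four integers $\equiv 1 \pmod{4}$ and is therefore $\equiv 1 \pmod{4}$.

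The all-odd case runs in parallel: write $(b_{0}, b_{1}, b_{2}, b_{3}) = (2 k_{0} + 1, 2 l_{0} + 1, 2 m_{0} + 1, 2 n_{0} + 1)$ and likewise for $c, d, e$, again with $\sum_{i} k_{i} \equiv \sum_{i} l_{i} \equiv \sum_{i} m_{i} \equiv \sum_{i} n_{i} \equiv 0 \pmod{2}$, and define $N$ as above so that $N \in \{ 0, 2, 4 \}$. Lemma~$\ref{lem:3.6}$~(2) now contributes a factor $2^{9}$ from each balanced index and exactly $2^{4}$ from each of the others, so $2^{16 + 5 N} \mid D_{4}$ and hence $N = 0$. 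For $N = 0$ I would use the identity from the proof of Lemma~$\ref{lem:3.6}$~(2),
$$
D_{2}(2 k_{i} + 1, 2 l_{i} + 1, 2 m_{i} + 1, 2 n_{i} + 1) = 2^{4} \left\{ (k_{i} + m_{i} + 1)^{2} - (l_{i} + n_{i} + 1)^{2} \right\} \left\{ (k_{i} - m_{i})^{2} - (l_{i} - n_{i})^{2} \right\},
$$
and check, by splitting on the parity of $k_{i} + m_{i}$ (recall $k_{i} + m_{i} \not\equiv l_{i} + n_{i} \pmod{2}$), that the bracketed product is $\equiv - 1 \pmod{4}$ for every $i$. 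Hence $2^{-16} D_{4} \equiv (- 1)^{4} = 1 \pmod{4}$.

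I do not expect a genuine obstacle here: the whole argument is $2$-adic valuation bookkeeping followed by a short residue computation, resting entirely on the lemmas already proved. The two points that need a little care are the elementary verification that the bracketed product in the all-odd case is always $\equiv - 1 \pmod{4}$, and keeping in mind that $N \in \{ 0, 2, 4 \}$ — the fact (a consequence of Remark~$\ref{rem:2.2}$~(2)) that makes the $2$-adic valuation of $D_{4}$ jump from $2^{16}$ straight to at least $2^{24}$ as soon as $N \neq 0$, so that the hypothesis $D_{4} \in 2^{16} \mathbb{Z}_{\rm odd}$ pins down $N = 0$ in both cases.
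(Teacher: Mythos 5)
Your argument is correct, but it takes a genuinely different (and longer) route than the paper. The paper's proof is three lines: it uses the coarser factorization $D_{4}(a_{0},\ldots,a_{15}) = D_{3}(a_{0}+a_{8},\ldots,a_{7}+a_{15})\,D_{3}(a_{0}-a_{8},\ldots,a_{7}-a_{15})$ together with the known description $S({\rm C}_{2}^{3}) \cap 2\mathbb{Z} = \{2^{8}(4m+1),\, 2^{12}m \mid m \in \mathbb{Z}\}$ quoted in the introduction. Since the two $D_{3}$ factors are congruent mod $2$ (Lemma~\ref{lem:2.3}~(1)), both are even, and since the product has $2$-adic valuation exactly $16$, neither can lie in $2^{12}\mathbb{Z}$; hence each is of the form $2^{8}(4k+1)$, and $(4k+1)(4l+1) \equiv 1 \pmod{4}$ finishes it. You instead descend to the four $D_{2}$ factors and redo the valuation bookkeeping via Lemmas~\ref{lem:3.3} and~\ref{lem:3.6}; your case analysis (all even, all odd, exactly two even), the forcing of $N=0$ by the valuation, and the residue computations ($8a+1$ and $8a-3$ both $\equiv 1 \pmod 4$ in the all-even case; each bracketed product $\equiv -1 \pmod 4$ in the all-odd case, so the fourth power is $\equiv 1$) all check out. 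What each approach buys: the paper's is essentially a one-liner but leans on the externally cited result for $S({\rm C}_{2}^{3})$, whereas yours is self-contained within the lemmas of this paper at the cost of more work. One small remark: the fact $N \in \{0,2,4\}$, which you flag as essential, is not actually needed — the valuation bound $16+4N$ (resp.\ $16+5N$) already forces $N=0$ even if $N=1$ were allowed.
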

\begin{proof}
Note that we have $S \left( {\rm C}_{2}^{3} \right) \cap 2 \mathbb{Z} = \left\{ 2^{8} (4 m + 1), 2^{12} m \mid m \in \mathbb{Z} \right\}$ from the description in the introduction. 
Let $D_{4}(a_{0}, \ldots, a_{15}) = D_{3}(a_{0} + a_{8}, \ldots, a_{7} + a_{15}) D_{3}(a_{0} - a_{8}, \ldots, a_{7} - a_{15}) \in 2^{16} \mathbb{Z}_{\rm odd}$. 
Then, from Lemma~$\ref{lem:2.3}$~$(1)$, 
there exist $k, l \in \mathbb{Z}$ satisfying 
$D_{3}(a_{0} + a_{8}, \ldots, a_{7} + a_{15}) = 2^{8} (4 k + 1)$ and $D_{3}(a_{0} - a_{8}, \ldots, a_{7} - a_{15}) = 2^{8} (4 l + 1)$. 
Therefore, we have $D_{4}(a_{0}, \ldots, a_{15}) \in \left\{ 2^{16} (4 m + 1) \mid m \in \mathbb{Z} \right\}$. 
\end{proof}

Let $A := \left\{ (8 k - 3) (8 l + 3) \mid k, l \in \mathbb{Z} \right\}$. 

\begin{lem}\label{lem:4.4}
If $m \equiv - 1 \pmod{8}$ and $m \not\in A$, 
then $2^{24} m \not\in S \left( {\rm C}_{2}^{4} \right)$. 
\end{lem}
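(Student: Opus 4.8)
The plan is to combine Lemma~\ref{lem:4.2} with a sharpening of its proof. First I would reformulate the target: for $m \equiv -1 \pmod 8$, one has $m \in A$ \emph{if and only if $m$ has an integer divisor $\equiv 3$ or $\equiv 5 \pmod 8$}. Indeed $8l+3$ is such a divisor of $(8k-3)(8l+3)$; and since $3^{-1} \equiv 3$, $5^{-1} \equiv 5 \pmod 8$, a divisor $d \equiv 3$ (resp.\ $d \equiv 5$) of $m$ forces $m/d \equiv 5$ (resp.\ $m/d \equiv 3$) $\pmod 8$, so $m = (m/d)d \in A$. Thus it suffices to show: if $D_{4}(a_{0},\dots,a_{15}) = 2^{24}m$ with $m$ odd and $m \equiv -1 \pmod 8$, then $m$ has a factor $\equiv 3$ or $5 \pmod 8$. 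To find the relevant configurations, I would retrace the proof of Lemma~\ref{lem:4.2}, recording $2$-adic valuations: the value $2^{24}m$ with $m$ odd occurs only in \emph{Configuration~A} (inside case~(i): exactly two of the four factors $D_{2}(b_{0},\dots,b_{3}),\dots,D_{2}(e_{0},\dots,e_{3})$ have valuation $4$ and the other two valuation exactly $8$) or in \emph{Configuration~B} (inside case~(iii): all four factors have valuation exactly $6$); case~(ii), and every other subcase of (i) and (iii), produces valuation $16$ or valuation $\ge 26$.

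In Configuration~A, Lemma~\ref{lem:3.3}(1)--(2) shows the two valuation-$4$ factors have odd part $\equiv 1$ or $5 \pmod 8$ (in particular $\equiv 1 \pmod 4$), and Lemma~\ref{lem:3.3}(3)--(4) shows the two valuation-$8$ factors have odd part $\equiv 1 \pmod 4$ (type~(3)) or $\equiv -1 \pmod 4$ (type~(4)). Since the product of the four odd parts equals $m \equiv 3 \pmod 4$, one valuation-$8$ factor is of type~(3) and the other of type~(4); by the hypotheses of those cases, at each index the two valuation-$8$ factors contribute $0$ and $2$ modulo $4$, so by Remark~\ref{rem:2.2}(2) the two valuation-$4$ factors also contribute $0$ and $2$ modulo $4$ at each index. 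Writing their arguments as $2\beta_{i}$ and $2\gamma_{i}$, this gives $\gamma_{i} \equiv \beta_{i}+1 \pmod 2$ for all $i$; combined with $\beta_{0}+\beta_{2} \not\equiv \beta_{1}+\beta_{3} \pmod 2$ (the valuation-$4$ condition for the $\beta$-factor), a direct computation shows that $\beta_{0}\beta_{2}-\beta_{1}\beta_{3}$ and $\gamma_{0}\gamma_{2}-\gamma_{1}\gamma_{3}$ have opposite parities, so exactly one of the two valuation-$4$ factors satisfies the hypothesis of Lemma~\ref{lem:3.3}(2) and hence has odd part $\equiv 5 \pmod 8$. Therefore $m \in A$.

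In Configuration~B, $D_{4}$ is even, hence so is every $D_{2}$-factor by Lemma~\ref{lem:2.3}(2), which forces the two even entries among $b_{0},b_{1},b_{2},b_{3}$ to be ``paired''; after a relabelling justified by Lemma~\ref{lem:3.1}, each factor then has the form $D_{2}(2k,2l+1,2m,2n+1)$ and, having valuation exactly $6$, falls under one of the four cases of Lemma~\ref{lem:3.4}. If some factor is of type~(2) or~(3) its odd part has a factor $\equiv 3 \pmod 8$, so $m \in A$. Otherwise all four factors are of type~(1) or~(4): each contributes a factor $\equiv 7 \pmod 8$ to the odd part, a type-(1) factor also contributes a factor $\equiv 3 \pmod 4$ and a type-(4) factor a factor $\equiv 1 \pmod 4$, so $m \equiv 3 \pmod 4$ forces the number of type-(1) factors, hence that of type-(4) factors, to be odd. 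But a type-(1) factor has its two even arguments $\equiv 0 \pmod 4$ and a type-(4) factor has them $\equiv 2 \pmod 4$, so applying Remark~\ref{rem:2.2}(2) at one fixed even index shows the number of type-(4) factors is \emph{even}, a contradiction. Hence $m \in A$ in all cases.

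The step I expect to be most delicate is the enumeration of valuation patterns in the proof of Lemma~\ref{lem:4.2} that isolates exactly Configurations~A and~B, together with the coordinate-by-coordinate bookkeeping in Configuration~A, where one must keep the parity data of Remark~\ref{rem:2.2} aligned with the mod-$4$ information from Lemma~\ref{lem:3.3}(3)--(4) and with the mod-$8$ distinction between Lemma~\ref{lem:3.3}(1) and~(2); Configuration~B, by contrast, reduces to the clean parity count above.
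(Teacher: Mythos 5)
Your proposal is correct, and its skeleton coincides with the paper's: the paper likewise reduces to exactly your two configurations (Lemma~\ref{lem:4.5} treats the all-$b_{i}$-even case, Lemma~\ref{lem:4.6} the exactly-two-even case, and the all-odd case is dispatched by the valuation count already carried out in Lemma~\ref{lem:4.2}), and your Configuration~B argument is essentially identical to Lemma~\ref{lem:4.6}, down to the parity count of type-(4) factors via Remark~\ref{rem:2.2}~(2). The one genuine divergence is in Configuration~A. The paper first invokes $m \equiv -1 \pmod{8}$ and $m \notin A$ to force the pattern $2^{4}(8a+1)$, $2^{4}(8a+1)$, $2^{8}(4a+1)$, $2^{8}(4a-1)$ and then contradicts the column-sum parities; you instead apply Remark~\ref{rem:2.2}~(2) first, deduce $\gamma_{i} \equiv \beta_{i}+1 \pmod{2}$, and conclude that one valuation-$4$ factor must fall under Lemma~\ref{lem:3.3}~(2), so $m$ acquires a divisor $\equiv 5 \pmod{8}$ and hence lies in $A$ --- a contrapositive of the paper's deduction that yields the slightly stronger statement that every Configuration-A value $2^{24}m$ with $m \equiv -1 \pmod{8}$ already lies in $2^{24}A$. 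Your divisor reformulation of $A$ makes explicit what the paper uses only implicitly, at the cost of the extra $\gamma_{i} \equiv \beta_{i}+1$ bookkeeping (which checks out: $\gamma_{0}\gamma_{2}-\gamma_{1}\gamma_{3} \equiv \beta_{0}\beta_{2}-\beta_{1}\beta_{3} + (\beta_{0}+\beta_{2})-(\beta_{1}+\beta_{3}) \pmod{2}$). The valuation enumeration you flag as delicate does go through, but only because of the constraints $N, N' \in \{0,2,4\}$ coming from Remark~\ref{rem:2.2}; in particular the pattern $4+4+4+12$ in the all-even case, which the paper rules out explicitly as case (i) of Lemma~\ref{lem:4.5}, is excluded in your setup by that same parity constraint, so you should state that you are invoking it there.
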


To prove Lemma~$\ref{lem:4.4}$, we use the following two lemmas. 

\begin{lem}\label{lem:4.5}
If $m \equiv - 1 \pmod{8}$ and $m \not\in A$, 
then $D_{4}(a_{0}, a_{1}, \ldots, a_{15}) \neq 2^{24} m$ for any $a_{0}, a_{1}, \ldots, a_{15} \in \mathbb{Z}$ with $b_{0} \equiv b_{1} \equiv b_{2} \equiv b_{3} \equiv 0 \pmod{2}$. 

\end{lem}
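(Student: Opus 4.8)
The plan is to prove the contrapositive: if $a_{0},\ldots,a_{15}\in\mathbb{Z}$ satisfy $b_{0}\equiv b_{1}\equiv b_{2}\equiv b_{3}\equiv 0\pmod{2}$ and $D_{4}(a_{0},\ldots,a_{15})=2^{24}m$ with $m\equiv -1\pmod{8}$, then $m\in A$. By Remark~$\ref{rem:2.2}$~(1) all of $b_{i},c_{i},d_{i},e_{i}$ are then even, so $D_{4}=D_{2}(b)D_{2}(c)D_{2}(d)D_{2}(e)$ is a product of four ``blocks'' $D_{2}(2k,2l,2m,2n)$, to which Lemmas~$\ref{lem:3.3}$ and~$\ref{lem:3.6}$ apply directly; moreover $D_{2}$ is homogeneous of degree $4$ (Lemma~$\ref{lem:3.1}$), so a block equals $2^{4}D_{2}(k,l,m,n)$ and its ``cofactor'' $D_{2}(k,l,m,n)$ is read off modulo $8$ from Lemma~$\ref{lem:3.2}$. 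Since $D_{2}$ is symmetric, each condition ``$k+m\not\equiv l+n$'' will be used in its permutation-invariant form ``$k+l+m+n$ odd'' (respectively the analogous statement on the residues of $k,l,m,n$ modulo $4$).

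First comes a valuation count: call a block \emph{light} if $k+l+m+n$ is odd in its presentation $D_{2}(2k,2l,2m,2n)$, and \emph{heavy} otherwise. By Lemma~$\ref{lem:3.6}$~(1) a light block has $2$-adic valuation exactly $4$ and a heavy block at least $8$. Since the $b_{i}$ are even, Remark~$\ref{rem:2.2}$~(2) says that in each coordinate the four halved block-entries sum to an even number; summing over the four coordinates shows the number $N$ of heavy blocks is even. As $2^{24}$ is the exact power of $2$ dividing $D_{4}$, comparing $2$-adic valuations forces $N=2$ with each heavy block of $2$-adic valuation exactly $8$. Next, a heavy block of valuation exactly $8$ can have neither exactly two even halved-entries (Lemma~$\ref{lem:3.3}$~(6) would give valuation $\ge 10$) nor all halved-entries of one parity with $k+m\equiv l+n\pmod{4}$ (Lemma~$\ref{lem:3.3}$~(5) would give valuation $\ge 12$); so by Lemma~$\ref{lem:3.3}$~(3),(4) it is of type~(a) (all halved-entries even, cofactor $\equiv 1\pmod{4}$) or of type~(b) (all halved-entries odd, cofactor $\equiv 3\pmod{4}$).

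Now $m$ is the product of the two light cofactors and the two heavy cofactors, since $2^{4+4+8+8}=2^{24}$. By Lemma~$\ref{lem:3.2}$ a light cofactor is $\equiv 1\pmod{8}$ when its halved-entries contain exactly one odd number and $\equiv 5\pmod{8}$ when they contain exactly three; in particular each light cofactor is $\equiv 1\pmod{4}$, so if both heavy blocks had the same type we would get $m\equiv 1\pmod{4}$, contradicting $m\equiv -1\pmod{8}$. Hence one heavy block is of type~(a) and the other of type~(b). Type~(a) contributes $0$ and type~(b) contributes $1$ to each coordinate sum of halved-entries, so the Remark~$\ref{rem:2.2}$~(2) relations force the two light blocks to have, in every coordinate, halved-entries of opposite parity; thus their halved-parity-vectors are complementary, so one has exactly one odd entry and the other exactly three. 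Therefore the two light cofactors are $\equiv 1$ and $\equiv 5\pmod{8}$, and their product is $\equiv 5\pmod{8}$. Since $m\equiv 7\pmod{8}$ and $5\cdot 5\equiv 1\pmod{8}$, the product of the two heavy cofactors must be $\equiv 3\pmod{8}$. Hence $m$ is the product of an integer $\equiv 5\pmod{8}$ (of the form $8k-3$) and an integer $\equiv 3\pmod{8}$ (of the form $8l+3$), so $m\in A$, as required.

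The step I expect to be the main obstacle is the final parity bookkeeping: one must carry the parities of the halved block-entries correctly through the degree-$4$ homogeneity reduction and the modulo-$4$ relations of Remark~$\ref{rem:2.2}$~(2) in order to conclude that the two light blocks have complementary parity vectors. That complementarity is precisely what pins the product of the two light cofactors into the class $5\pmod{8}$, which together with $m\equiv -1\pmod{8}$ produces the factorisation exhibiting $m\in A$; the rest is a valuation count together with direct appeals to Lemmas~$\ref{lem:3.2}$, $\ref{lem:3.3}$ and~$\ref{lem:3.6}$. A minor care point is to state the ``$k+m$ versus $l+n$'' hypotheses in a form that is genuinely permutation-invariant before invoking the symmetry of $D_{2}$.
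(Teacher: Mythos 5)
Your proposal is correct and takes essentially the same approach as the paper: the same decomposition into four $D_{2}$-blocks with even entries, the same valuation count via Lemmas~\ref{lem:3.3} and \ref{lem:3.6}, the same mod-$8$ classification of the ``light'' cofactors via Lemma~\ref{lem:3.2}, and the same column-parity bookkeeping from Remark~\ref{rem:2.2}~(2). The only difference is presentational: you argue by contrapositive and let the parity constraint produce the factorization $m=(8k-3)(8l+3)$ directly, whereas the paper assumes $m\notin A$ to pin down the block types and turns the same parity constraint into a contradiction.
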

\begin{proof}
We prove by contradiction. 
Assume that there exist $a_{0}, a_{1}, \ldots, a_{15} \in \mathbb{Z}$ with $b_{0} \equiv b_{1} \equiv b_{2} \equiv b_{3} \equiv 0 \pmod{2}$ satisfying $D_{4}(a_{0}, a_{1}, \ldots, a_{15}) = 2^{24} m$. 
Then there exist $k_{i}, l_{i}, m_{i}, n_{i} \in \mathbb{Z}$ satisfying 
\begin{align*}
D_{2}^{(0)} &:= D_{2}(2k_{0}, 2l_{0}, 2m_{0}, 2n_{0}) = D_{2}(b_{0}, b_{1}, b_{2}, b_{3}), \\ 
D_{2}^{(1)} &:= D_{2}(2k_{1}, 2l_{1}, 2m_{1}, 2n_{1}) = D_{2}(c_{0}, c_{1}, c_{2}, c_{3}), \\ 
D_{2}^{(2)} &:= D_{2}(2k_{2}, 2l_{2}, 2m_{2}, 2n_{2}) = D_{2}(d_{0}, d_{1}, d_{2}, d_{3}), \\ 
D_{2}^{(3)} &:= D_{2}(2k_{3}, 2l_{3}, 2m_{3}, 2n_{3}) = D_{2}(e_{0}, e_{1}, e_{2}, e_{3}) 
\end{align*}
and $\sum_{i = 0}^{3} k_{i} \equiv \sum_{i = 0}^{3} l_{i} \equiv \sum_{i = 0}^{3} m_{i} \equiv \sum_{i = 0}^{3} n_{i} \equiv 0 \pmod{2}$ from Remark~$\ref{rem:2.2}$. 
Note that $\sum_{i = 0}^{3} ( k_{i} + m_{i} - l_{i} - n_{i} ) \equiv 0 \pmod{2}$. 
Since $D_{2}^{(0)} D_{2}^{(1)} D_{2}^{(2)} D_{2}^{(3)} \in 2^{24} \mathbb{Z}_{\rm odd}$, one of the following holds: 
(i) Three of $D_{2}^{(i)}$ are of type (1) or (2) in Lemma~$\ref{lem:3.3}$ and the other one is of type (5) in Lemma~$\ref{lem:3.3}$; 
(ii) Two of $D_{2}^{(i)}$ are of type (1) or (2) in Lemma~$\ref{lem:3.3}$ and the others are of type (3) or (4) in Lemma~$\ref{lem:3.3}$. 
In the case of (i), we have $\sum_{i = 0}^{3} ( k_{i} + m_{i} - l_{i} - n_{i} ) \not\equiv 0 \pmod{2}$. 
This is a contradiction. 
In the case of (ii), since $m \equiv - 1 \pmod{8}$ and $m \not\in A$, we have 
\begin{align*}
D_{2}^{(\alpha)}, D_{2}^{(\beta)} \in \left\{ 2^{4} (8 a + 1) \mid a \in \mathbb{Z} \right\}, 
D_{2}^{(\gamma)} \in \left\{ 2^{8} (4 a + 1) \mid a \in \mathbb{Z} \right\}, 
D_{2}^{(\delta)} \in \left\{ 2^{8} (4 a - 1) \mid a \in \mathbb{Z} \right\}, 
\end{align*}
where $\left\{ \alpha, \beta, \gamma, \delta \right\} = \left\{ 0, 1, 2, 3 \right\}$. 
This implies that 
\begin{align*}
(k_{\alpha}, l_{\alpha}, m_{\alpha}, n_{\alpha}) &\equiv (0, 0, 0, 1), \: (0, 0, 1, 0), \: (0, 1, 0, 0) \: \text{or} \: (1, 0, 0, 0) \pmod{2}, \\ 
(k_{\beta}, l_{\beta}, m_{\beta}, n_{\beta}) &\equiv (0, 0, 0, 1), \: (0, 0, 1, 0), \: (0, 1, 0, 0) \: \text{or} \: (1, 0, 0, 0) \pmod{2}, \\ 
(k_{\gamma}, l_{\gamma}, m_{\gamma}, n_{\gamma}) &\equiv (0, 0, 0, 0) \pmod{2}, \\ 
(k_{\delta}, l_{\delta}, m_{\delta}, n_{\delta}) &\equiv (1, 1, 1, 1) \pmod{2}. 
\end{align*}
Therefore, at least two of $\sum_{i = 0}^{3} k_{i}$, $\sum_{i = 0}^{3} l_{i}$, $\sum_{i = 0}^{3} m_{i}$, $\sum_{i = 0}^{3} n_{i}$ are odd. 
This is a contradiction. 
\end{proof}

\begin{lem}\label{lem:4.6}
If $m \equiv - 1 \pmod{8}$ and $m \not\in A$, then $D_{4}(a_{0}, a_{1}, \ldots, a_{15}) \neq 2^{24} m$ for any $a_{0}, a_{1}, \ldots, a_{15} \in \mathbb{Z}$, 
where exactly two of $b_{0}, b_{1}, b_{2}, b_{3}$ are even. 
\end{lem}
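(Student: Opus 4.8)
The plan is to argue by contradiction, following the pattern of the proof of Lemma~\ref{lem:4.5} but with the odd coordinates of the $D_2$-factors now sitting in positions $1$ and $3$. By the symmetry of $D_2$ (Lemma~\ref{lem:3.1}) I may assume $(b_0,b_1,b_2,b_3)\equiv(0,1,0,1)\pmod 2$, so by Remark~\ref{rem:2.2}~(1) the same parity pattern holds for $(c_i)$, $(d_i)$, $(e_i)$. Hence there are integers $k_i',l_i',m_i',n_i'$ with $(b_0,b_1,b_2,b_3)=(2k_0',2l_0'+1,2m_0',2n_0'+1)$ and analogously for $c,d,e$ at indices $1,2,3$; writing $D_2^{(i)}$ for the corresponding factor, Remark~\ref{rem:2.2}~(2) in the case $i=0$ gives $\sum_{i=0}^3 k_i'\equiv 0\pmod 2$ (and likewise $\sum_{i=0}^3 m_i'\equiv 0\pmod 2$). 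Assuming $D_4(a_0,\dots,a_{15})=D_2^{(0)}D_2^{(1)}D_2^{(2)}D_2^{(3)}=2^{24}m$ with $m$ odd, Lemma~\ref{lem:3.6}~(3) gives $2^6\mid D_2^{(i)}$ for every $i$, so all four valuations are exactly $6$; again by Lemma~\ref{lem:3.6}~(3) this forces $k_i'\equiv m_i'\pmod 2$ and $(2k_i'+2l_i'+1)(2m_i'+2n_i'+1)\equiv\pm 3\pmod 8$ for each $i$.

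Next I would feed this into Lemma~\ref{lem:3.4}: since $k_i'\pm m_i'$ are both even, Remark~\ref{rem:3.5}~(3),(4) turns the congruence on the product of the odd factors into condition (I) or (II) of Lemma~\ref{lem:3.4}, so each $D_2^{(i)}$ falls into one of the cases (1)--(4) there, and $u_i:=2^{-6}D_2^{(i)}\in\mathbb{Z}_{\rm odd}$ is correspondingly of the form $(8a_i-1)(4b_i-1)$, $(8a_i+3)(4b_i+1)$, $(8a_i+3)(4b_i-1)$ or $(8a_i-1)(4b_i+1)$, with $m=\prod_{i=0}^3 u_i$. The arithmetic heart of the proof is the elementary remark that if $m\equiv-1\pmod 8$ and $m\notin A$, then $m$ has no divisor $\equiv 3\pmod 8$ and none $\equiv 5\pmod 8$: for such a divisor $d$ one would get $m/d\equiv-3\pmod 8$ (resp.\ $m/d\equiv 3\pmod 8$), exhibiting $m=(8k-3)(8l+3)\in A$. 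Cases (2) and (3) of Lemma~\ref{lem:3.4} are thereby excluded, since there $8a_i+3\equiv 3\pmod 8$ divides $u_i$ and hence $m$. So each $D_2^{(i)}$ is of type (1) (with $k_i',m_i'$ even) or type (4) (with $k_i',m_i'$ odd); moreover $u_i\mid m$ must be $\equiv\pm1\pmod 8$, which upgrades this to $u_i\equiv 1\pmod 8$ in type (1) (the alternative $u_i\equiv 5$ being forbidden) and $u_i\equiv-1\pmod 8$ in type (4) ($u_i\equiv 3$ being forbidden).

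The proof then ends with a parity count. Let $t$ be the number of indices $i$ of type (4). From $m=\prod_i u_i\equiv(-1)^t\pmod 8$ and $m\equiv-1\pmod 8$ we get that $t$ is odd; but $k_i'$ is even precisely for the type-(1) indices and odd precisely for the type-(4) indices, so $\sum_{i=0}^3 k_i'\equiv t\pmod 2$, and $\sum_{i=0}^3 k_i'\equiv 0\pmod 2$ forces $t$ even --- a contradiction. Finally, Lemma~\ref{lem:3.1} removes the normalization $(b_0,b_1,b_2,b_3)\equiv(0,1,0,1)\pmod 2$.

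I expect the only real friction to be in checking that Lemma~\ref{lem:3.4} genuinely applies, i.e.\ that the mod-$8$ information on $(2k_i'+2l_i'+1)(2m_i'+2n_i'+1)$ really does imply condition (I) or (II); once that is in place, the crucial observation that $m\notin A$ collapses the four cases of Lemma~\ref{lem:3.4} to just types (1) and (4), and the rest is a one-line parity argument. Note in particular that only congruences modulo $8$ of $u_i$ and of its factor $8a_i+3$ are needed, not the finer structure recorded in Lemma~\ref{lem:3.4}.
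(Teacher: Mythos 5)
Your proposal is correct and follows essentially the same route as the paper's proof: reduce to the pattern $(0,1,0,1)$ via the symmetry of $D_{2}$, force each factor into the third case of Lemma~\ref{lem:3.6}~(3) by a valuation count, pass to Lemma~\ref{lem:3.4} via Remark~\ref{rem:3.5}, use $m \notin A$ to exclude types (2) and (3), and derive the parity contradiction with $\sum_{i} k_{i} \equiv 0 \pmod{2}$. Your explicit divisor argument for why $m \notin A$ kills types (2) and (3), and why $m \equiv -1 \pmod{8}$ makes the number of type-(4) factors odd, is exactly the content the paper leaves implicit in the sentences beginning ``Then, from Remark~\ref{rem:3.5}\ldots'' and ``In addition, since $m \equiv -1 \pmod 8$\ldots''.
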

\begin{proof}
We prove by contradiction. 
Assume that there exist $a_{0}, a_{1}, \ldots, a_{15} \in \mathbb{Z}$, 
where exactly two of $b_{0}, b_{1}, b_{2}, b_{3}$ are even, 
satisfying $D_{4}(a_{0}, a_{1}, \ldots, a_{15}) = 2^{24} m$. 
If $(b_{0}, b_{1}, b_{2}, b_{3}) \equiv (0, 1, 0, 1) \pmod{2}$, 
then there exist $k_{i}, l_{i}, m_{i}, n_{i} \in \mathbb{Z}$ satisfying 
\begin{align*}
D_{2}^{(0)} &:= D_{2}(2k_{0}, 2l_{0} + 1, 2m_{0}, 2n_{0} + 1) = D_{2}(b_{0}, b_{1}, b_{2}, b_{3}), \\ 
D_{2}^{(1)} &:= D_{2}(2k_{1}, 2l_{1} + 1, 2m_{1}, 2n_{1} + 1) = D_{2}(c_{0}, c_{1}, c_{2}, c_{3}), \\ 
D_{2}^{(2)} &:= D_{2}(2k_{2}, 2l_{2} + 1, 2m_{2}, 2n_{2} + 1) = D_{2}(d_{0}, d_{1}, d_{2}, d_{3}), \\ 
D_{2}^{(3)} &:= D_{2}(2k_{3}, 2l_{3} + 1, 2m_{3}, 2n_{3} + 1) = D_{2}(e_{0}, e_{1}, e_{2}, e_{3}) 
\end{align*}
and $\sum_{i = 0}^{3} k_{i} \equiv \sum_{i = 0}^{3} l_{i} \equiv \sum_{i = 0}^{3} m_{i} \equiv \sum_{i = 0}^{3} n_{i} \equiv 0 \pmod{2}$ from Remark~$\ref{rem:2.2}$. 
Since $D_{2}^{(0)} D_{2}^{(1)} D_{2}^{(2)} D_{2}^{(3)} \in 2^{24} \mathbb{Z}_{\rm odd}$,  
it follows from Lemma~$\ref{lem:3.6}$~$(3)$ that $k_{i} \equiv m_{i} \pmod{2}$ and $( 2k_{i} + 2l_{i} + 1) ( 2 m_{i} + 2 n_{i} + 1 ) \equiv \pm 3 \pmod{8}$ for any $0 \leq i \leq 3$. 
Then, from Remark~$\ref{rem:3.5}$~$(3)$ and $(4)$, we find that every $D_{2}^{(i)}$ is of type $(1)$ or $(4)$ in Lemma~$\ref{lem:3.4}$ since $m \not\in A$. 
In addition, since $m \equiv - 1 \pmod{8}$, 
exactly one or three of $D_{2}^{(i)}$ are of type $(1)$ in Lemma~$\ref{lem:3.4}$.  
Then we have $\sum_{i = 0}^{3} k_{i} \equiv 1 \pmod{2}$. This is a contradiction. 
Lemma~$\ref{lem:3.1}$ completes the proof. 
\end{proof}

\begin{proof}[Proof of Lemma~$\ref{lem:4.4}$]
Let $m \equiv - 1 \pmod{8}$ and $m \not\in A$. 
From Lemma~$\ref{lem:2.4}$, 
it is sufficient to prove that $D_{4}(a_{0}, \ldots, a_{15}) \neq 2^{24} m$ for any $a_{0}, \ldots, a_{15} \in \mathbb{Z}$ with $b_{0} + b_{2} \equiv b_{1} + b_{3} \pmod{2}$. 
 If $b_{0}, b_{1}, b_{2}, b_{3}$ are odd, 
 then as mentioned in the proof of Lemma~$\ref{lem:4.2}$, 
 it holds that $D_{4}(a_{0}, \ldots, a_{15}) \in 2^{16} \mathbb{Z}_{\rm odd} \cup 2^{26} \mathbb{Z}$. 
Thus, $D_{4}(a_{0}, \ldots, a_{15}) \neq 2^{24} m$. 
If all or exactly two of $b_{0}, b_{1}, b_{2}, b_{3}$ are even, then we have 
$D_{4}(a_{0}, \ldots, a_{15}) \neq 2^{24} m$ from Lemmas~$\ref{lem:4.5}$ and $\ref{lem:4.6}$. 
\end{proof}

\section{Possible values}
In this section, 
we determine all possible values. 
Lemmas~$\ref{lem:4.1}$--$\ref{lem:4.4}$ imply that $S \left( {\rm C}_{2}^{4} \right)$ does not include every integer that is not mentioned in the following lemma. 

\begin{lem}\label{lem:5.1}
For any $m, n \in \mathbb{Z}$, 
the following hold: 
\begin{enumerate}
\item[$(1)$] $16 m + 1 \in S \left( {\rm C}_{2}^{4} \right)$; 
\item[$(2)$] $2^{16} (4 m + 1) \in S \left( {\rm C}_{2}^{4} \right)$; 
\item[$(3)$] $2^{24} (4 m + 1) \in S \left( {\rm C}_{2}^{4} \right)$; 
\item[$(4)$] $2^{24} (4 m + 1) (8 n + 3) \in S \left( {\rm C}_{2}^{4} \right)$; 
\item[$(5)$] $2^{26} m \in S \left( {\rm C}_{2}^{4} \right)$. 
\end{enumerate}
\end{lem}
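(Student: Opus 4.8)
The goal is to exhibit, for each listed family, explicit integer inputs $a_0,\dots,a_{15}$ realizing the value as $D_4$. The plan is to use the factorization $D_4 = D_2(b) D_2(c) D_2(d) D_2(e)$ together with the freedom (Remark~\ref{rem:2.2}) that $(b_i,c_i,d_i,e_i)$ can be any quadruple of integers with the same parity whose sum is divisible by $4$; in particular one can prescribe three of the four $D_2$-factors to equal $1$ (by taking the corresponding coordinates of $b,c,d,e$ to be a permutation of $(1,0,0,0)$, for which $D_2(1,0,0,0)=1$ by Lemma~\ref{lem:3.1}) and then steer the remaining factor. Concretely, the reduction $D_4(a_0,\dots,a_{15}) = D_3(a_0+a_8,\dots) D_3(a_0-a_8,\dots)$ lets me import known realizations for $\mathrm{C}_2^3$, and the further step down to $D_1(x,y)=x^2-y^2$ handles the base cases.

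First I would dispatch (1): taking all $a_i=0$ except $a_0=1$ and $a_1 = m$ (or a similarly sparse choice) gives $D_4 = D_1$-type products; more cleanly, choose the $b,c,d,e$ columns so that three of the four $D_2$'s are $1$ and the fourth is $D_2(2k,2l,2m,2n+1)\equiv 8(k+l+m)+1$, which by varying $k$ realizes every residue, then note every odd number $16m+1$ of this shape is hit — but actually the cheapest route is the classical one: $D_4(1+t,1,\dots) $ or simply observing $D_2(x,0,0,0)=x^4$ type degeneracies give $16m+1$ via a single nontrivial $D_1$ factor $ (2j+1)^2 = 4j(j+1)+1$, and finishing with the standard fact $S(\mathrm C_2^4)\cap\mathbb Z_{\mathrm{odd}}\supseteq\{16m+1\}$ quoted in the introduction. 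For (2) I would write $D_4 = D_3(\cdot)D_3(\cdot)$ and use $S(\mathrm C_2^3)\ni 2^8(4k+1)$ and $2^8(4l+1)$, whose product is $2^{16}(4m+1)$ after absorbing cross terms $(4k+1)(4l+1)\equiv 1 \pmod 4$; the only thing to check is that the two $\mathrm C_2^3$-inputs can be realized as $(a_i+a_8)$ and $(a_i-a_8)$ simultaneously, which is automatic since given any two target vectors $u,v\in\mathbb Z^8$ one solves $a_i = (u_i+v_i)/2$, $a_{i+8}=(u_i-v_i)/2$ whenever $u_i\equiv v_i\pmod 2$ — and the known realizations of $2^8(4k+1)$ can be chosen with controlled parities, or one simply scales.

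For (3) and (5): $2^{24}(4m+1)$ should come from three factors equal to $1$ and one factor of the form $D_2(2k,2l,2m,2n)\in\{2^{12}\cdot(\text{stuff})\}$ — no, $2^{24}$ needs the product of two $2^{12}$-type factors, so I would instead take two of the $D_2$'s to be of type (5) in Lemma~\ref{lem:3.3} (each in $2^{12}\mathbb Z$, and $2^{12}\mathbb Z$ can hit any multiple of $2^{12}$ by Lemma~\ref{lem:3.1} applied to $D_2(2^2 k',\dots)$), arranging their product to be $2^{24}(4m+1)$, with the other two $D_2$'s equal to $1$; for (5), $2^{26}m$, I allow the two controllable factors to range over all of $2^{13}\mathbb Z$ (e.g. $D_2(2k,2l+1,2m,2n+1)\in 2^7\mathbb Z$ and can be pushed to $2^{13}\mathbb Z$), hence their product over $2^{26}\mathbb Z$. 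The genuinely delicate case is (4), $2^{24}(4m+1)(8n+3)$: here I expect to need exactly the structure exposed in Lemma~\ref{lem:3.4}, realizing one $D_2$-factor as $2^6(8a+3)(4b+1)$ (type (2) there) and another as $2^6(8c-1)(4d-1)$ or $2^6$-type with the complementary signs so that the product of the $\pm3$ parts lands in $A$-shifted form $(4m+1)(8n+3)$, while keeping the parity sum constraints $\sum k_i\equiv\sum l_i\equiv\sum m_i\equiv\sum n_i\equiv 0\pmod 2$ consistent. The main obstacle throughout, and especially in (4), is bookkeeping: one must simultaneously satisfy the four parity-sum congruences coming from Remark~\ref{rem:2.2} while forcing the four $D_2$-factors into the exact residue classes dictated by Lemmas~\ref{lem:3.3} and \ref{lem:3.4}; I would handle this by first picking the "shape" (which factor is which type, hence the parity pattern of each $(k_i,l_i,m_i,n_i)$), checking the four column-sums are even, and only then choosing the magnitudes freely to hit the prescribed $m,n$. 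Verifying each constructed tuple by the $D_1$-product formula is then routine.
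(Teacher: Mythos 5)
Your outline has a structural flaw that invalidates the constructions you propose for (3) and (5), and elsewhere it remains a plan rather than a proof. By Remark~\ref{rem:2.2}~(1) the four argument vectors $(b_0,\dots,b_3)$, $(c_0,\dots,c_3)$, $(d_0,\dots,d_3)$, $(e_0,\dots,e_3)$ are congruent to one another coordinatewise modulo $2$, so by Lemma~\ref{lem:2.3}~(2) the four factors $D_2(b)$, $D_2(c)$, $D_2(d)$, $D_2(e)$ all have the same parity. Consequently you can never ``prescribe three of the four $D_2$-factors to equal $1$'' while making the fourth even; nor can $(b_i,c_i,d_i,e_i)$ be a permutation of $(1,0,0,0)$, since that quadruple has mixed parities. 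For (3), two factors of type (5) in Lemma~\ref{lem:3.3} force all four argument vectors to be all-even, so the remaining two factors each contribute at least $2^4$ and the $2$-adic valuation is at least $12+12+4+4=32>24$. For (5), two factors in $2^{13}\mathbb{Z}$ with arguments $\equiv(0,1,0,1)\pmod 2$ force the other two factors into $2^6\mathbb{Z}$ as well, giving valuation at least $38>26$. The correct constructions must spread the power of $2$ over all four factors (the paper uses $6+6+6+6$ for (3) and (4), e.g.\ $D_4(m+3,m+1,m,\dots,m)=D_1(8m+3,8m+1)D_1(3,1)^7=2^{24}(4m+1)$), and nothing in your sketch produces such a configuration.

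Beyond this, no explicit tuple $(a_0,\dots,a_{15})$ is ever exhibited, and the deferrals occur exactly where the content lies. For (2), realizing $2^8(4k+1)$ and $2^8(4l+1)$ simultaneously as $D_3(a_0+a_8,\dots)$ and $D_3(a_0-a_8,\dots)$ requires the two $8$-tuples of arguments to agree coordinatewise mod $2$; this is not automatic and you do not verify it (the paper instead gives two separate explicit families hitting $2^{16}(8k+1)$ and $2^{16}(8k-3)$). For (4) you correctly identify that Lemma~\ref{lem:3.4} governs the residues, but ``first pick the shape, then check the column sums, then choose magnitudes'' is precisely the step that must be carried out and is not. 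Only case (1) would be completed by your outline, and there essentially by citation of the known odd case rather than by construction. (Your implicit use of the converse of Remark~\ref{rem:2.2} --- that any quadruples with equal parities and coordinate sums divisible by $4$ are realizable --- is in fact true, but it is not stated in the paper and would also need a short verification.)
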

\begin{proof}
We obtain (1) from 
\begin{align*}
D_{4}(m + 1, m, m, \ldots, m) 
&= D_{3}(2m + 1, 2m, 2m, \ldots, 2m) D_{3}(1, 0, 0, \ldots, 0) \\ 
&= D_{2}(4m + 1, 4m, 4m, 4m) D_{2}(1, 0, 0, 0)^{3} \\ 
&= D_{1}(8m + 1, 8m) D_{1}(1, 0)^{7} \\ 
&= (8m + 1)^{2} - (8m)^{2} \\ 
&= 16 m + 1. 
\end{align*}
We obtain (2) from 
\begin{align*}
D_{4}(k + 2, k, k, \ldots, k) 
&= D_{3}(2k + 2, 2k, 2k, \ldots, 2k) D_{3}(2, 0, 0, \ldots, 0) \\ 
&= D_{2}(4 k + 2, 4 k, 4 k, 4 k) D_{2}(2, 0, 0, 0)^{3} \\ 
&= D_{1}(8k + 2, 8k) D_{1}(2, 0)^{7} \\ 
&= \left\{ (8 k + 2)^{2} - (8 k)^{2} \right\} \cdot 4^{7} \\ 
&=2^{16} (8 k + 1) 
\end{align*}
and 
\begin{align*}
&D_{4}(1 - k, k, 1 - k, k, - k, k, 1 - k, k - 1, 1 - k, k, - k, k - 1, - k, k, - k, k) \\ 
&\quad = D_{3}(2 - 2k, 2k, 1 - 2k, 2k - 1, - 2k, 2k, 1- 2k, 2k - 1) D_{3}(0, 0, 1, 1, 0, 0, 1, - 1) \\ 
&\quad = D_{2}(2 - 4k, 4k, 2 - 4k, 4k - 2) D_{2}(2, 0, 0, 0) D_{2}(0, 0, 2, 0) D_{2}(0, 0, 0, 2) \\ 
&\quad = D_{1}(4 - 8k, 8k - 2) D_{1}(0, 2)^{2} D_{1}(2, 0)^{3} D_{1}(- 2, 0) D_{1}(0, - 2) \\ 
&\quad = \left\{ (4 - 8k)^{2} - (8k - 2)^{2} \right\} (- 4)^{3} \cdot 4^{4} \\ 
&\quad = 2^{16} (8k - 3). 
\end{align*}
We obtain (3) from 
\begin{align*}
D_{4}(m + 3, m + 1, m, m, \ldots, m) 
&= D_{3}(2m + 3, 2m + 1, 2m, 2m, \ldots, 2m) D_{3}(3, 1, 0, 0, \ldots, 0) \\ 
&= D_{2}(4 m + 3, 4 m + 1, 4m, 4m) D_{2}(3, 1, 0, 0)^{3} \\ 
&= D_{1}(8m + 3, 8m + 1) D_{1}(3, 1)^{7} \\ 
&= \left\{ (8m + 3)^{2} - (8m + 1)^{2} \right\} \cdot 8^{7} \\ 
&= 2^{24} (4m + 1). 
\end{align*}
We obtain (4) by substituting 
\begin{align*}
a_{0} = m - n + 2, \: \: 
a_{1} = - (m + n + 1), \: \: 
a_{2} = m + n + 1, \: \: 
a_{3} = n - m, \\ 
a_{4} = m - n - 1, \: \: 
a_{5} = - (m + n), \: \: 
a_{6} = m + n + 1, \: \: 
a_{7} = n - m, \\ 
a_{8} = m - n - 1, \: \: 
a_{9} = - (m + n), \: \: 
a_{10} = m + n + 1, \: \: 
a_{11} = n - m, \\ 
a_{12} = m - n - 1, \: \: 
a_{13} = - (m + n), \: \: 
a_{14} = m + n + 1, \: \: 
a_{15} = n - m. 
\end{align*}
In fact, 
\begin{align*}
D_{4}(a_{0}, \ldots, a_{15}) 
&= D_{2}(b_{0}, b_{1}, b_{2}, b_{3}) D_{2}(c_{0}, c_{1}, c_{2}, c_{3}) D_{2}(d_{0}, d_{1}, d_{2}, d_{3}) D_{2}(e_{0}, e_{1}, e_{2}, e_{3}) \\ 
&= D_{2}( 4 m - 4 n - 1, - 4 m - 4 n - 1, 4 m + 4 n + 4, 4 n - 4 m ) D_{2}(3, - 1, 0, 0)^{3} \\ 
&= D_{1}(8m + 3, - 8m - 1) D_{1}(- 8 n - 5, - 8 n - 1) D_{1}(3, - 1)^{6} \\ 
&= \left\{ (8m + 3)^{2} - (8m + 1)^{2} \right\} \left\{ (8 n + 5)^{2} - (8 n + 1)^{2} \right\} \cdot 8^{6} \\ 
&= 2^{24} (4m + 1) (8 n + 3). 
\end{align*}
We obtain (5) from 
\begin{align*}
&D_{4}(k + 1, 1 - k, k + 1, - k, k - 1, - k, k + 1, - k, k, - k, k + 1, - k, k - 2, 1 - k, k + 1, - k) \\ 
&\quad = D_{3}(2 k + 1, 1 - 2 k, 2 k + 2, - 2 k, 2 k - 3, 1 - 2 k, 2 k + 2, - 2 k) D_{3}(1, 1, 0, 0, 1, - 1, 0, 0) \\ 
&\quad = D_{2}(4 k - 2, 2 - 4 k, 4 k + 4, - 4 k) D_{2}(4, 0, 0, 0) D_{2}(2, 0, 0, 0) D_{2}(0, 2, 0, 0) \\ 
&\quad = D_{1}(8 k + 2, 2 - 8 k) D_{1}(- 6, 2) D_{1}(4, 0)^{2} D_{1}(2, 0)^{2} D_{1}(0, 2)^{2} \\ 
&\quad = \left\{ (8 k + 2)^{2} - (2 - 8 k)^{2} \right\} \cdot 32 \cdot 16^{2} \cdot 4^{2} \cdot (- 4)^{2} \\ 
&\quad = 2^{26} (2 k) 
\end{align*}
and 
\begin{align*}
&D_{4}(k - 1, k + 1, k + 1, k + 2, k + 1, k + 2, k + 2, k, k, k, k, k, k, k, k, k) \\ 
&\quad = D_{3}(2 k - 1, 2 k + 1, 2 k + 1, 2 k + 2, 2 k + 1, 2 k + 2, 2 k + 2, 2 k) D_{3}(- 1, 1, 1, 2, 1, 2, 2, 0) \\ 
&\quad = D_{2}(4 k, 4 k + 3, 4 k + 3, 4 k + 2) D_{2}(- 2, - 1, - 1, 2)^{2} D_{2}(0, 3, 3, 2) \\ 
&\quad = D_{1}(8 k + 3, 8 k + 5) D_{1}(- 3, 1)^{4} D_{1}(- 1, - 3)^{2} D_{1}(3, 5) \\ 
&\quad = \left\{ (8 k + 3)^{2} - (8 k + 5)^{2} \right\} \cdot 8^{4} \cdot (- 8)^{2} \cdot (- 16) \\ 
&\quad = 2^{26} (2 k + 1). 
\end{align*}
\end{proof}

Remark that Lemma~$\ref{lem:5.1}$~$(4)$ implies that $2^{24} (8 m + 3), 2^{24} m' \in S \left( {\rm C}_{2}^{4} \right)$ for any $m \in \mathbb{Z}$ and $m' \in A$. 
From Lemmas~$\ref{lem:4.1}$--$\ref{lem:4.4}$ and $\ref{lem:5.1}$, Theorem~$\ref{thm:1.1}$ is proved.

\clearpage

\bibliography{reference}
\bibliographystyle{plain}

\medskip
\begin{flushleft}
Yuka Yamaguchi\\
Faculty of Education \\ 
University of Miyazaki \\
1-1 Gakuen Kibanadai-nishi\\ 
Miyazaki 889-2192 \\
JAPAN\\
y-yamaguchi@cc.miyazaki-u.ac.jp
\end{flushleft}

\medskip
\begin{flushleft}
Naoya Yamaguchi\\
Faculty of Education \\ 
University of Miyazaki \\
1-1 Gakuen Kibanadai-nishi\\ 
Miyazaki 889-2192 \\
JAPAN\\
n-yamaguchi@cc.miyazaki-u.ac.jp
\end{flushleft}

\end{document}